\documentclass[a4paper,11pt]{article}

\usepackage{amsmath,amssymb,amsthm,mathtools}
\usepackage{cite}
\usepackage{enumitem}
\usepackage{geometry}
\usepackage{microtype}
\usepackage{xcolor}
\usepackage{tabularx}
\usepackage{booktabs}
\usepackage[hidelinks]{hyperref}

\numberwithin{equation}{section}
\setlist[itemize]{leftmargin=2em}

\newtheorem{theorem}{Theorem}[section]
\newtheorem{proposition}[theorem]{Proposition}
\newtheorem{lemma}[theorem]{Lemma}
\newtheorem{corollary}[theorem]{Corollary}
\theoremstyle{definition}

\newtheorem{example}[theorem]{Example}
\newtheorem{remark}[theorem]{Remark}

\definecolor{checkbg}{RGB}{255,247,224}
\definecolor{checkfg}{RGB}{126,77,0}
\definecolor{draftred}{RGB}{150,24,24}

\newcommand{\N}{\mathbb{N}}
\newcommand{\Z}{\mathbb{Z}}
\newcommand{\K}{\mathbb{K}}
\newcommand{\G}{\mathcal{G}}
\newcommand{\cO}{\mathcal{O}}
\newcommand{\KP}{\operatorname{KP}}
\newcommand{\Spec}{\operatorname{Spec}}
\newcommand{\Prim}{\operatorname{Prim}}
\newcommand{\MaxSpec}{\operatorname{MaxSpec}}
\newcommand{\Ind}{\operatorname{Ind}}
\newcommand{\Ann}{\operatorname{Ann}}
\newcommand{\rank}{\operatorname{rank}}
\newcommand{\id}{\operatorname{id}}
\newcommand{\Span}{\operatorname{span}}
\newcommand{\Per}{\operatorname{Per}}

\newcommand{\MT}{\operatorname{MT}}

\title{Maximal Tails, Character Fibres and Induced Modules for\\
Pullback Kumjian--Pask Algebras}
\author{Nguyen Bich Van\thanks{Institute for Artificial Intelligence,
University of Engineering and Technology, Vietnam National University,
Hanoi, Vietnam. Email: \texttt{nbvan@vnu.edu.vn}.}}
\date{}

\begin{document}

\maketitle

\begin{abstract}
Let $f:\N^{k}\to\N^{\ell}$ be a surjective monoid homomorphism and let
$\Gamma$ be a row-finite $\ell$-graph with no sources and finitely many
vertices.  We give an explicit graded isomorphism from the Kumjian--Pask
algebra of the pullback $f^{*}\Gamma$ onto the tensor product of
$\KP_{\K}(\Gamma)$ and the group algebra of the kernel of the group
completion of $f$.  When $\Gamma$ is strongly aperiodic, but need not be
cofinal, every maximal tail $T$ and every maximal ideal $\mathfrak m$ of the
kernel group algebra determine an explicit primitive ideal and primitive
quotient.  If, in addition, $\K$ is uncountable and algebraically closed,
these ideals exhaust the primitive spectrum.  We prove that the resulting
parametrisation is a homeomorphism for the product of the maximal-tail and
Zariski topologies.  Each primitive ideal is realised as the annihilator of
a simple module induced from the isotropy of a path which is cofinal in
$T$, and the character fibres are algebraic tori.  Two examples exhibit,
respectively, a single character fibre and the non-Hausdorff gluing of two
such fibres.
\end{abstract}

\noindent\textit{Keywords.}
Kumjian--Pask algebra; higher-rank graph; pullback; maximal tail;
primitive ideal; character fibre; Steinberg algebra; induced module.

\smallskip
\noindent\textit{2020 Mathematics Subject Classification.}
16S99, 16D60, 16W50, 22A22.

\section{Introduction}\label{sec:introduction}

Kumjian--Pask algebras are algebraic analogues of higher-rank graph
$C^{*}$-algebras and extend the class of Leavitt path algebras.  Their
graded-uniqueness theorem and their ideal structure were established in
\cite{ArandaPinoClarkHuefRaeburn}.  The groupoid model developed further in
\cite{ClarkPangalela} identifies a Kumjian--Pask algebra with the Steinberg
algebra of its boundary-path groupoid.  These two descriptions make it
possible to pass between generators and relations, on the one hand, and
isotropy and induction, on the other.

Pullbacks are a standard source of higher-rank graphs.  If
$f:\N^{k}\to\N^{\ell}$ is a surjective monoid homomorphism and $\Gamma$ is
an $\ell$-graph, the pullback $f^{*}\Gamma$ is a $k$-graph whose paths retain
the paths of $\Gamma$ but record a finer degree.  Kumjian and Pask proved that
the associated path groupoid is a direct product with a free abelian group
\cite[Proposition~2.10]{KumjianPask}.  The corresponding analytic
representations were studied in
\cite[Proposition~3.3, Lemma~3.4 and Theorem~3.5]{CarlsenKangShotwellSims},
and tensor embeddings for more general periodic pullbacks occur in
\cite[Theorem~3.2]{Yang}.  In the algebraic setting, tensor products and
scalar extensions of Steinberg algebras are described in
\cite[Proposition~4.1 and Theorem~4.3]{RigbyTensor}.
Recent work on graded $K$-theory also uses pullbacks and records Laurent
extension examples; see
\cite[Proposition~4.8 and Example~7.1]{HazratMukherjeePaskSardar}.

The purpose of this paper is to give a generator-level account of the
pullback algebra and to trace its consequences through maximal tails,
primitive ideals and induced modules.  Put $H=\ker(\bar f)$, where
$\bar f:\Z^{k}\to\Z^{\ell}$ is the group completion of $f$.  After choosing
a group section $j:\Z^{\ell}\to\Z^{k}$, we obtain a graded isomorphism
\begin{equation}\label{eq:intro-decomposition}
  \KP_{\K}(f^{*}\Gamma)
  \cong_{\mathrm{gr}}
  \KP_{\K}(\Gamma)\otimes_{\K}\K[H].
\end{equation}
The isomorphism is explicit on every real and ghost generator.  It also
produces central units $U_h$, for $h\in H$, directly from finite
Kumjian--Pask sums.

The main point is that cofinality is not needed for the primitive-ideal
classification.  Assume instead that $\Gamma$ is strongly aperiodic and has
finitely many vertices.  Write $\MT(\Gamma)$ for its maximal tails, equipped
with the topology generated by
$\mathcal S(v)=\{T\in\MT(\Gamma):v\in T\}$.  For
$T\in\MT(\Gamma)$ and $\mathfrak m\in\MaxSpec\K[H]$, define
\begin{equation}\label{eq:intro-PTm}
 P_{T,\mathfrak m}
 =\langle p_v:v\notin T\rangle
   +\KP_{\K}(f^{*}\Gamma)\,\iota(\mathfrak m),
\end{equation}
where $\iota:\K[H]\to Z(\KP_{\K}(f^{*}\Gamma))$ is the central embedding
given by the kernel units.  For every field $\K$, these ideals are primitive
and the corresponding quotient is
\begin{equation}\label{eq:intro-main-quotient}
 \KP_{\K}(f^{*}\Gamma)/P_{T,\mathfrak m}
 \cong
 \KP_{\K[H]/\mathfrak m}
       (\Gamma\setminus(\Gamma^0\setminus T)).
\end{equation}
If $\K$ is uncountable and algebraically closed, we prove that
\begin{equation}\label{eq:intro-main-homeo}
 \MT(\Gamma)\times\MaxSpec\K[H]
 \longrightarrow\Prim\KP_{\K}(f^{*}\Gamma),
 \qquad (T,\mathfrak m)\longmapsto P_{T,\mathfrak m},
\end{equation}
is a homeomorphism.
Thus every maximal-tail stratum carries the same character fibre.  If $\K$
is uncountable and algebraically closed and $r=\rank H$, that fibre is the algebraic torus
$(\K^\times)^r$.  The closure relation is particularly explicit:
\[
 P_{S,\mathfrak n}\in\overline{\{P_{T,\mathfrak m}\}}
 \quad\Longleftrightarrow\quad
 S\subseteq T\ \hbox{ and }\ \mathfrak n=\mathfrak m.
\]
This product topology is specific to the split pullbacks considered here.
For comparison, maximal-tail and periodicity parameters in higher-rank graph
$C^*$-algebras were obtained in \cite{CarlsenKangShotwellSims}, while the
general groupoid $C^*$-topology requires subtler isotropy analysis
\cite{ChristensenNeshveyev}.

We also connect \eqref{eq:intro-main-homeo} with isotropy induction.  For
each maximal tail $T$, finiteness and (MT3) give a vertex reached from every
vertex in $T$.  Strong aperiodicity then supplies an aperiodic infinite path
$x_T$ starting there.  Its canonical lift to $f^*\Gamma$ has isotropy $H$,
and induction of the residue field $\K[H]/\mathfrak m$ gives a simple module
whose annihilator is $P_{T,\mathfrak m}$.  This continues the induction
perspective of \cite{NguyenNguyen} while avoiding any identification of an
isotropy group algebra with a vertex corner.

For completeness, we first treat the cofinal aperiodic case, where the full
ideal lattice is controlled by ideals of $\K[H]$.  We obtain
\begin{equation}\label{eq:intro-spectra}
 \Spec\KP_{\K}(f^{*}\Gamma)\cong\Spec\K[H]
\end{equation}
over every field, and the analogous homeomorphism
$\Prim\KP_{\K}(f^{*}\Gamma)\cong\MaxSpec\K[H]$ under the same uncountable
algebraically closed hypothesis.

In Section~\ref{sec:preliminaries} we introduce notations.  The explicit decomposition
and the central units are proved in Section~\ref{sec:decomposition}.
In Section~\ref{sec:ideals} we consider  the cofinal coefficient-controlled case, and in
Section~\ref{sec:tails} we prove the maximal-tail classification and its
topology.  Finally, in Section~\ref{sec:modules} we constructs the induced modules and
in Section~\ref{sec:examples} we give two rank-two families and records the
boundary of the method.

\section{Preliminaries}\label{sec:preliminaries}

Throughout, $\N$ contains $0$ and $\K$ is a field.  Algebras associated to
infinite vertex sets are allowed to be nonunital and have local units;
modules over them are assumed unitary in the sense that $AM=M$.  When the
vertex set is finite, the relevant Kumjian--Pask algebra is unital.  For
$n\in\Z^{k}$, write $n=n_{+}-n_{-}$ for its coordinatewise positive and
negative parts.  If $H$ is an abelian group, $\K[H]$ denotes its group
algebra, with canonical basis $\{u_h:h\in H\}$ and $u_hu_g=u_{h+g}$.

\subsection{Higher-rank graphs and their algebras}

A \emph{$k$-graph} is a countable small category $\Lambda$ equipped with a
functor $d:\Lambda\to\N^{k}$ satisfying the factorisation property: whenever
$d(\lambda)=m+n$, there are unique composable paths $\mu,\nu$ such that
$\lambda=\mu\nu$, $d(\mu)=m$ and $d(\nu)=n$.  We identify the objects with
$\Lambda^{0}=d^{-1}(0)$ and use $r$ and $s$ for range and source.  For
$v\in\Lambda^{0}$ and $n\in\N^{k}$, put
\[
  v\Lambda^{n}=\{\lambda\in\Lambda:d(\lambda)=n, r(\lambda)=v\}.
\]
The graph is \emph{row-finite} if each $v\Lambda^{n}$ is finite, and it has
\emph{no sources} if these sets are nonempty.

For a commutative unital ring $R$, the algebra $\KP_R(\Lambda)$ is generated
by a Kumjian--Pask family
\[
 \{p_v:v\in\Lambda^{0}\}
 \cup\{s_\lambda,s_{\lambda^{*}}:
        \lambda\in\Lambda\setminus\Lambda^{0}\}.
\]
We use the convention $s_v=s_{v^{*}}=p_v$.  The defining relations are:
\begin{enumerate}[label=(KP\arabic*),leftmargin=3.4em]
\item the $p_v$ are mutually orthogonal idempotents;
\item $s_\lambda s_\mu=s_{\lambda\mu}$ and
      $s_{\mu^{*}}s_{\lambda^{*}}=s_{(\lambda\mu)^{*}}$ whenever the
      products are defined, with the expected vertex relations;
\item if $d(\lambda)=d(\mu)$, then
      $s_{\lambda^{*}}s_\mu=\delta_{\lambda,\mu}p_{s(\lambda)}$;
\item $p_v=\sum_{\lambda\in v\Lambda^{n}}s_\lambda s_{\lambda^{*}}$
      for every $v\in\Lambda^{0}$ and $0\ne n\in\N^{k}$.
\end{enumerate}
The algebra has its canonical $\Z^{k}$-grading, in which
\begin{equation}\label{eq:canonical-grading}
 \deg(s_\lambda s_{\mu^{*}})=d(\lambda)-d(\mu).
\end{equation}
We shall use the graded-uniqueness theorem in the following form: a graded
homomorphism out of $\KP_R(\Lambda)$ is injective if the image of $rp_v$ is
nonzero for every $v$ and every $0\ne r\in R$; see
\cite[Theorem~4.1]{ArandaPinoClarkHuefRaeburn}.

We say that $\Gamma$ is \emph{cofinal} if, for every $v\in\Gamma^{0}$ and
$x\in\Gamma^{\infty}$, there exists $n\in\N^{\ell}$ such that
$v\Gamma x(n)\ne\varnothing$.  We use the infinite-path formulation of the
aperiodicity condition: for every $v\in\Gamma^0$ there is
$x\in v\Gamma^\infty$ such that $\sigma^m x\ne\sigma^n x$ whenever
$m\ne n$.  For row-finite higher-rank graphs with no sources this is
equivalent to the finite-path condition used in
\cite{ArandaPinoClarkHuefRaeburn}.  If $\Gamma$ is cofinal and aperiodic and
$R$ is a field, then $\KP_R(\Gamma)$ is simple
\cite[Theorem~6.1]{ArandaPinoClarkHuefRaeburn}.

A subset $L\subseteq\Gamma^0$ is \emph{hereditary} if
$v\in L$ and $v\Gamma w\ne\varnothing$ imply $w\in L$.  It is
\emph{saturated} if, whenever $v\in\Gamma^0$ and
$s(v\Gamma^n)\subseteq L$ for some $0\ne n\in\N^\ell$, one has
$v\in L$.  For a saturated hereditary set $L$, write
\begin{equation}\label{eq:quotient-graph}
 \Gamma\setminus L=\{\lambda\in\Gamma:s(\lambda)\notin L\}.
\end{equation}
This is an $\ell$-graph with vertex set $\Gamma^0\setminus L$.  We call
$\Gamma$ \emph{strongly aperiodic} if
$\Gamma\setminus L$ is aperiodic for every proper saturated hereditary
$L\subsetneq\Gamma^0$.

A nonempty subset $T\subseteq\Gamma^0$ is a \emph{maximal tail} if it has
the following three properties:
\begin{enumerate}[label=(MT\arabic*),leftmargin=3.7em]
\item if $w\in T$ and $v\Gamma w\ne\varnothing$, then $v\in T$;
\item for every $v\in T$ and $n\in\N^\ell$, there is
      $\lambda\in v\Gamma^n$ with $s(\lambda)\in T$;
\item for every $v,w\in T$, there is $z\in T$ such that
      $v\Gamma z\ne\varnothing$ and $w\Gamma z\ne\varnothing$.
\end{enumerate}
We write $\MT(\Gamma)$ for the set of maximal tails.  Conditions (MT1) and
(MT2) say precisely that $\Gamma^0\setminus T$ is hereditary and saturated;
see \cite[Definition~3.10]{KangPask}.  Strong aperiodicity is the graph
counterpart of strong effectiveness of the boundary-path groupoid
\cite{ClarkEdieHuefSims}.

We shall also use the following standard form of Dixmier's lemma.  We include
the argument because it is the point at which the cardinality hypothesis in
our exhaustivity theorem enters.

\begin{lemma}[Dixmier's lemma]\label{lem:Dixmier}
Let $\K$ be an uncountable algebraically closed field, let $R$ be a
countable-dimensional $\K$-algebra, and let $V$ be a simple left $R$-module.
Then
\[
 \operatorname{End}_R(V)=\K\id_V.
\]
\end{lemma}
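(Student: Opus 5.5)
The plan is the classical Schur-plus-cardinality argument. Let $D=\operatorname{End}_R(V)$. I will show that $D$ is a division algebra over $\K$ of countable dimension. Then I will show that every element of $D$ is algebraic over $\K$, using the uncountability of $\K$. Algebraic closedness then forces $D=\K\id_V$.

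First, Schur's lemma: any nonzero $\phi\in D$ has kernel and image that are $R$-submodules of the simple module $V$. So $\ker\phi=0$ and $\operatorname{im}\phi=V$, and $\phi$ is invertible in $D$. Hence $D$ is a division ring containing $\K\id_V$ in its centre, since $R$ is a $\K$-algebra and $R$-linear maps commute with scalars. Next I bound $\dim_\K D$. Since $V$ is simple, we have $RV\neq0$ (if $RV=0$ then every $\K$-subspace is a submodule, and simplicity in the usual sense excludes this; with the paper's unitary convention $RV=V$ anyway). So pick $0\ne v\in V$ with $Rv\neq 0$; then $Rv=V$. Consequently $V$ is a quotient of $R$ as a vector space and has countable dimension. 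An endomorphism $\phi$ is determined by $\phi(v)$, because $\phi(rv)=r\phi(v)$ and $Rv=V$. The map $D\to V$, $\phi\mapsto\phi(v)$, is therefore $\K$-linear and injective, so $\dim_\K D\le\dim_\K V\le\aleph_0$.

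The key step is to show every $\phi\in D$ is algebraic over $\K$, and here the cardinality hypothesis is used. Suppose $\phi\notin\K\id_V$. Then for each $\lambda\in\K$ the element $\phi-\lambda\id_V$ is nonzero, hence invertible in $D$. I claim the uncountable family $\{(\phi-\lambda)^{-1}:\lambda\in\K\}$ is $\K$-linearly dependent, since $\dim_\K D$ is countable and $\K$ is uncountable. So there are distinct $\lambda_1,\dots,\lambda_n$ and scalars $c_i$, not all zero, with $\sum_i c_i(\phi-\lambda_i)^{-1}=0$. All these inverses lie in the commutative subring generated by $\phi$ and the inverses, so I can clear denominators by multiplying by $\prod_i(\phi-\lambda_i)$. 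This gives $p(\phi)=0$, where
\[
p(t)=\sum_i c_i\prod_{j\ne i}(t-\lambda_j).
\]
The polynomial $p$ is nonzero: evaluating at $t=\lambda_i$ gives $c_i\prod_{j\ne i}(\lambda_i-\lambda_j)\neq0$ for any $i$ with $c_i\ne0$. So $\phi$ is algebraic over $\K$.

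Finally, since $\K$ is algebraically closed, $p$ factors as $p(t)=c\prod_m(t-\mu_m)$ with $c\neq0$. Then $0=c\prod_m(\phi-\mu_m)$ in the division ring $D$, so some factor $\phi-\mu_m$ must vanish. That means $\phi=\mu_m\id_V\in\K\id_V$, contradicting the assumption $\phi\notin\K\id_V$. This gives $D=\K\id_V$. I expect no real obstacle. The only points needing care are the countable-dimension bound via a cyclic generator, and noting that the relevant inverses commute so that clearing denominators is legitimate.
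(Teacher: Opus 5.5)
Your proof is correct and follows essentially the same route as the paper: Schur's lemma, then linear dependence of the uncountable family $(\varphi-\lambda)^{-1}$ forced by countable dimension, then clearing denominators to get a nonzero polynomial killing $\varphi$, and finally algebraic closedness inside a division ring. The only difference is cosmetic: you first embed $\operatorname{End}_R(V)$ into $V$ by evaluating at a cyclic vector and take the dependence there, whereas the paper lifts $(\varphi-a)^{-1}\xi$ to elements $r_a\in R$, takes the dependence in $R$, and then uses $V=R\xi$ to pass from $q(\varphi)\xi=0$ to $q(\varphi)=0$.
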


\begin{proof}
The endomorphism ring is a division ring.  Let
$\varphi\in\operatorname{End}_R(V)$ and suppose that $\varphi$ is
transcendental over $\K$.  Then $\varphi-a$ is invertible for every
$a\in\K$.  Fix $0\ne\xi\in V$.  Since $V=R\xi$, choose $r_a\in R$ such
that
\[
 r_a\xi=(\varphi-a)^{-1}\xi.
\]
The uncountable family $\{r_a:a\in\K\}$ is linearly dependent in the
countable-dimensional space $R$.  Thus, for distinct
$a_1,\ldots,a_t$, there are scalars $c_i$, not all zero, such that
\[
 \sum_{i=1}^t c_i(\varphi-a_i)^{-1}\xi=0.
\]
Multiplication by $\prod_i(\varphi-a_i)$ gives $q(\varphi)\xi=0$, where
$q(X)=\sum_i c_i\prod_{j\ne i}(X-a_j)$ is nonzero.  Since $\varphi$
commutes with the $R$-action and $V=R\xi$, this forces
$q(\varphi)=0$, a contradiction.  Hence $\varphi$ is algebraic over
$\K$.  Its minimal polynomial splits into linear factors, and a product of
nonzero elements cannot vanish in a division ring.  Therefore
$\varphi=a\id_V$ for some $a\in\K$.
\end{proof}

\subsection{Pullbacks}

Let $f:\N^{k}\to\N^{\ell}$ be a surjective monoid homomorphism.  Its group
completion
\[
 \bar f:\Z^{k}\longrightarrow\Z^{\ell}
\]
is surjective.  Since $\Z^{\ell}$ is free, there is a group homomorphism
$j:\Z^{\ell}\to\Z^{k}$ such that $\bar f\circ j=\id$.  Put
\begin{equation}\label{eq:H-definition}
 H=\ker\bar f.
\end{equation}
Then $H$ is free abelian of rank $k-\ell$ and
\begin{equation}\label{eq:split-group}
 \Z^{k}=j(\Z^{\ell})\oplus H.
\end{equation}
The section $j$ need not map $\N^{\ell}$ into $\N^{k}$.

For an $\ell$-graph $\Gamma$, its pullback along $f$ is
\begin{equation}\label{eq:pullback}
 f^{*}\Gamma=\{(\lambda,n)\in\Gamma\times\N^{k}:
                    d(\lambda)=f(n)\}.
\end{equation}
Its degree is $d(\lambda,n)=n$, and multiplication is
$(\lambda,n)(\mu,m)=(\lambda\mu,n+m)$.  The vertices of $f^{*}\Gamma$ are
canonically identified with those of $\Gamma$.  Surjectivity of $f$ implies
that row-finiteness and the no-sources condition pass from $\Gamma$ to
$f^{*}\Gamma$.

\section{The pullback decomposition}\label{sec:decomposition}

Write $B=\KP_{\K}(\Gamma)$ and let
$B=\bigoplus_{q\in\Z^{\ell}}B_q$ be its canonical grading.  The splitting in
\eqref{eq:split-group} gives a grading on $B\otimes_{\K}\K[H]$ which remembers
the finer pullback degree.

\begin{lemma}\label{lem:target-grading}
For $g\in\Z^{k}$, set
\begin{equation}\label{eq:target-grading}
 D_g=B_{\bar f(g)}\otimes_{\K}
       \K u_{g-j(\bar f(g))}.
\end{equation}
Then
\[
 B\otimes_{\K}\K[H]=\bigoplus_{g\in\Z^{k}}D_g
\]
is a $\Z^{k}$-graded algebra.
\end{lemma}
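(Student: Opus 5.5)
The plan is to realise the $D_g$ as the homogeneous components of a tensor-product grading pulled back along an isomorphism of grading groups. The splitting \eqref{eq:split-group} gives a group isomorphism
\[
 \theta:\Z^{k}\longrightarrow\Z^{\ell}\times H,\qquad
 \theta(g)=\bigl(\bar f(g),\,g-j(\bar f(g))\bigr),
\]
with inverse $(q,h)\mapsto j(q)+h$. First I will check that $\theta$ is well defined and bijective. Since $\bar f(g-j\bar f(g))=\bar f(g)-\bar f(g)=0$, the second coordinate lies in $H$. Because $\bar f$ and $j$ are homomorphisms, $\theta$ is additive. The formula for $\theta^{-1}$ is verified directly using $\bar f\circ j=\id$ and $\bar f(h)=0$ for $h\in H$.

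Next I will use the standard tensor-product grading. The algebra $B=\bigoplus_q B_q$ is $\Z^{\ell}$-graded by \eqref{eq:canonical-grading}. The group algebra $\K[H]=\bigoplus_{h\in H}\K u_h$ is $H$-graded, since $u_hu_{h'}=u_{h+h'}$. Tensor products over a field distribute over direct sums, so
\[
 B\otimes_{\K}\K[H]=\bigoplus_{(q,h)\in\Z^{\ell}\times H}B_q\otimes_{\K}\K u_h .
\]
On elementary tensors we have $(b\otimes u_h)(b'\otimes u_{h'})=bb'\otimes u_{h+h'}$, and $bb'\in B_{q+q'}$ whenever $b\in B_q$ and $b'\in B_{q'}$. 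Hence $(B_q\otimes\K u_h)(B_{q'}\otimes\K u_{h'})\subseteq B_{q+q'}\otimes\K u_{h+h'}$, so this is a $\Z^{\ell}\times H$-grading.

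Finally I reindex through $\theta$. By definition, $D_g$ is the component indexed by $\theta(g)$. Since $\theta$ is a bijection, the direct sum over $\Z^{\ell}\times H$ becomes $\bigoplus_{g\in\Z^k}D_g$. Since $\theta$ is additive, the product relation becomes $D_gD_{g'}\subseteq D_{g+g'}$.

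No step is a real obstacle. The only point needing care is the bijectivity and additivity of $\theta$, in particular that $g\mapsto g-j\bar f(g)$ is additive and lands in $H$. The identity $\bar f\circ j=\id$ handles both. If $B$ is nonunital (infinitely many vertices), the argument is unchanged, because the grading uses only the direct-sum decompositions and multiplicativity, never a unit.
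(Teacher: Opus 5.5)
Your proposal is correct and takes essentially the same route as the paper: the paper's observation that each $B_q\otimes\K u_h$ occurs exactly once, in degree $j(q)+h$, is the bijectivity of your $\theta$ with inverse $(q,h)\mapsto j(q)+h$, and its product computation on elementary tensors is the additivity of $\theta$. Pulling back the natural $\Z^{\ell}\times H$-grading along $\theta$ is a tidier packaging of the same argument.
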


\begin{proof}
The exponent in \eqref{eq:target-grading} belongs to $H$.  Conversely, for
$q\in\Z^{\ell}$ and $h\in H$, the homogeneous tensor space
$B_q\otimes\K u_h$ occurs exactly once, namely in degree $j(q)+h$.  This
proves that the sum is direct and exhaustive.  If $a\in B_{\bar f(g)}$ and
$b\in B_{\bar f(g')}$, then
\[
 (a\otimes u_{g-j(\bar f(g))})
 (b\otimes u_{g'-j(\bar f(g'))})
 =ab\otimes u_{g+g'-j(\bar f(g+g'))},
\]
which belongs to $D_{g+g'}$.
\end{proof}

We now give the promised generator-level isomorphism.  Although the groupoid
form of the decomposition follows from
\cite[Proposition~2.10]{KumjianPask} and
\cite[Theorem~4.3]{RigbyTensor}, the direct proof records the degree and the
central elements needed later.

\begin{theorem}[Pullback decomposition]\label{thm:decomposition}
Let $\Gamma$ be a row-finite $\ell$-graph with no sources and let
$f:\N^{k}\to\N^{\ell}$ be surjective.  For every group section $j$ of
$\bar f$, there is a $\Z^{k}$-graded isomorphism
\begin{equation}\label{eq:theta}
 \Theta_j:\KP_{\K}(f^{*}\Gamma)
 \longrightarrow B\otimes_{\K}\K[H]
\end{equation}
determined by
\begin{align}
 \Theta_j(p_v)&=p_v\otimes1,\label{eq:theta-vertex}\\
 \Theta_j(s_{(\lambda,n)})
   &=s_\lambda\otimes u_{n-j(d(\lambda))},\label{eq:theta-real}\\
 \Theta_j(s_{(\lambda,n)^{*}})
   &=s_{\lambda^{*}}\otimes
       u_{j(d(\lambda))-n}.
       \label{eq:theta-ghost}
\end{align}
The target carries the grading of Lemma~\ref{lem:target-grading}.
\end{theorem}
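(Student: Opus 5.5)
We will construct $\Theta_j$ from the universal property of $\KP_{\K}(f^{*}\Gamma)$. Then we prove injectivity with the graded-uniqueness theorem and surjectivity by exhibiting every generator of the target in the image.

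\textbf{Existence and grading.} We check that the elements on the right-hand sides of \eqref{eq:theta-vertex}--\eqref{eq:theta-ghost} form a Kumjian--Pask family for $f^{*}\Gamma$ in $B\otimes_{\K}\K[H]$. First, the exponent $n-j(d(\lambda))=n-j(f(n))$ lies in $H$ because $\bar f\circ j=\id$.

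(KP1) is immediate. For (KP2), the exponents add: $(n-j(d(\lambda)))+(m-j(d(\mu)))=(n+m)-j(d(\lambda\mu))$, since $j$ is additive and $d$ is a functor. The ghost relation is analogous.

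For (KP3), $d(\lambda,n)=d(\mu,m)$ means $n=m$, so $d(\lambda)=f(n)=d(\mu)$. Then
\[
(s_{\lambda^*}\otimes u_{j(d(\lambda))-n})(s_\mu\otimes u_{n-j(d(\mu))})=\delta_{\lambda,\mu}p_{s(\lambda)}\otimes 1,
\]
and $\lambda=\mu$ is equivalent to $(\lambda,n)=(\mu,n)$.

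For (KP4), fix $0\ne n\in\N^k$. The map $\lambda\mapsto(\lambda,n)$ is a bijection $v\Gamma^{f(n)}\to v(f^{*}\Gamma)^n$. If $f(n)\ne0$, then
\[
\sum s_\lambda s_{\lambda^*}\otimes u_0=p_v\otimes1
\]
by (KP4) in $B$. If $f(n)=0$, then $v\Gamma^{0}=\{v\}$ and the sum is $p_v\otimes1$ trivially. The universal property then gives the homomorphism $\Theta_j$.

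$\Theta_j$ is graded for the grading of Lemma~\ref{lem:target-grading}, because $s_\lambda\otimes u_{n-j(f(n))}$ lies in $D_n$ (here $d(\lambda)=\bar f(n)$), and similarly the ghost image lies in $D_{-n}$.

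\textbf{Injectivity.} Over the field $\K$, graded uniqueness \cite[Theorem~4.1]{ArandaPinoClarkHuefRaeburn} only requires $\Theta_j(p_v)=p_v\otimes 1\ne0$. This holds because $p_v\ne0$ in $B$, which is standard for row-finite graphs without sources, and tensor products over a field of nonzero vectors are nonzero.

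\textbf{Surjectivity.} This is the main step: we must produce $1\otimes u_h$, or rather $p_v\otimes u_h$, for each $h\in H$, together with $s_\lambda\otimes1$ and $s_{\lambda^*}\otimes1$.

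Write $h=h_+-h_-$ with $h_\pm\in\N^k$. Since $\bar f(h)=0$, we have $f(h_+)=f(h_-)=:q$. For $v\in\Gamma^0$ we consider the finite sum
\[
U_h^v=\sum_{\mu\in v\Gamma^{q}}s_{(\mu,h_+)}s_{(\mu,h_-)^*}.
\]
Its image under $\Theta_j$ is
\[
\sum_\mu s_\mu s_{\mu^*}\otimes u_{h_+-j(q)+j(q)-h_-}=p_v\otimes u_h
\]
by (KP4) in $B$, or trivially when $q=0$. When $\Gamma^0$ is infinite, summing over $v$ is unnecessary: the elements $p_v\otimes u_h$ together with $s_\lambda\otimes u_g$ suffice.

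Next, for $\lambda\in\Gamma$ we choose any $n\in\N^k$ with $f(n)=d(\lambda)$, which exists by surjectivity of $f$. Then
\[
\Theta_j(s_{(\lambda,n)}U^{s(\lambda)}_{j(d(\lambda))-n})=s_\lambda\otimes1,
\]
and the ghost case is symmetric.

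These elements generate $B\otimes\K[H]$ as an algebra, since the $p_v\otimes u_h$ and $s_\lambda\otimes1$, $s_{\lambda^*}\otimes1$ multiply to give all spanning tensors $s_\lambda s_{\mu^*}\otimes u_h$. Hence $\Theta_j$ is surjective, and therefore a graded isomorphism.
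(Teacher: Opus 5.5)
Your proposal is correct and follows essentially the same route as the paper. You verify the Kumjian--Pask relations, including the degenerate case $f(n)=0$ in (KP4), and get injectivity from graded uniqueness. For surjectivity you use the same finite sums $\sum_{\mu\in v\Gamma^{q}}s_{(\mu,h_+)}s_{(\mu,h_-)^*}\mapsto p_v\otimes u_h$ (the paper's $U_{v,h}$), combined with $s_{(\lambda,n)}U_{s(\lambda),\,j(d(\lambda))-n}\mapsto s_\lambda\otimes 1$ and its ghost counterpart.
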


\begin{proof}
For $v\in\Gamma^{0}$ put $Q_v=p_v\otimes1$.  For
$(\lambda,n)\in f^{*}\Gamma$, let $T_{(\lambda,n)}$ and
$T_{(\lambda,n)^{*}}$ be the right-hand sides of
\eqref{eq:theta-real} and \eqref{eq:theta-ghost}.  We first verify that
$(Q,T)$ is a Kumjian--Pask family.

Relation (KP1) follows from the corresponding relation in $B$.  Suppose that
$(\lambda,n)$ and $(\mu,m)$ are composable.  Since $j$ is a group
homomorphism,
\begin{align*}
 T_{(\lambda,n)}T_{(\mu,m)}
 &=s_{\lambda\mu}\otimes
   u_{n+m-j(d(\lambda)+d(\mu))}\\
 &=T_{(\lambda\mu,n+m)}.
\end{align*}
The ghost-path product follows in the same way, and the vertex cases are
immediate.  If $(\lambda,n)$ and $(\mu,n)$ have the same pullback degree,
then $d(\lambda)=d(\mu)=f(n)$ and
\[
 T_{(\lambda,n)^{*}}T_{(\mu,n)}
 =s_{\lambda^{*}}s_\mu\otimes1
 =\delta_{\lambda,\mu}Q_{s(\lambda)}.
\]
Finally, for $0\ne n\in\N^{k}$,
\begin{align*}
 \sum_{(\lambda,n)\in v(f^{*}\Gamma)^n}
 T_{(\lambda,n)}T_{(\lambda,n)^{*}}
 &=\sum_{\lambda\in v\Gamma^{f(n)}}
     s_\lambda s_{\lambda^{*}}\otimes1
 =Q_v.
\end{align*}
Here the last equality is (KP4) when $f(n)\ne0$; if $f(n)=0$, it is the
trivial identity coming from $v\Gamma^0=\{v\}$.
Thus the universal property gives a homomorphism $\Theta_j$ with the stated
values.

The element in \eqref{eq:theta-real} is homogeneous of degree $n$ for the
grading \eqref{eq:target-grading}, and the ghost image has degree $-n$.
Hence $\Theta_j$ is graded.  Moreover, $\Theta_j(rp_v)=rp_v\otimes1$ is
nonzero for $0\ne r\in\K$.  The graded-uniqueness theorem therefore implies
that $\Theta_j$ is injective.

It remains to prove surjectivity without assuming that $j$ is positive.  For
$h\in H$, write $h=h_{+}-h_{-}$ and put
\[
 q_h=f(h_{+})=f(h_{-}).
\]
For $v\in\Gamma^{0}$ define the finite sum
\begin{equation}\label{eq:Uvh}
 U_{v,h}=
 \sum_{\lambda\in v\Gamma^{q_h}}
 s_{(\lambda,h_{+})}s_{(\lambda,h_{-})^{*}}.
\end{equation}
Both pullback paths in every summand exist, and row-finiteness makes the sum
finite.  Equations \eqref{eq:theta-real}--\eqref{eq:theta-ghost} and (KP4),
with the trivial identity $v\Gamma^0=\{v\}$ used when $q_h=0$, give
\begin{equation}\label{eq:Uvh-image}
 \Theta_j(U_{v,h})=p_v\otimes u_h.
\end{equation}

Let $\lambda\in\Gamma$.  Choose $n\in\N^{k}$ with $f(n)=d(\lambda)$ and set
$h=n-j(d(\lambda))\in H$.  Then
\begin{align*}
 \Theta_j\bigl(s_{(\lambda,n)}U_{s(\lambda),-h}\bigr)
   &=s_\lambda\otimes1,\\
 \Theta_j\bigl(U_{s(\lambda),h}s_{(\lambda,n)^{*}}\bigr)
   &=s_{\lambda^{*}}\otimes1.
\end{align*}
Together with \eqref{eq:Uvh-image}, these elements generate every tensor
$s_\alpha s_{\beta^{*}}\otimes u_h$.  Since such tensors span the target,
$\Theta_j$ is surjective.
\end{proof}

The preceding theorem also gives a convenient scalar-extension description.

\begin{proposition}[Scalar extension]\label{prop:base-change}
There is a natural isomorphism
\begin{equation}\label{eq:base-change}
 B\otimes_{\K}\K[H]\cong\KP_{\K[H]}(\Gamma)
\end{equation}
which sends $s_\lambda\otimes c$ to $c s_\lambda$ and
$s_{\lambda^{*}}\otimes c$ to $c s_{\lambda^{*}}$.
Consequently,
\begin{equation}\label{eq:pullback-base-change}
 \KP_{\K}(f^{*}\Gamma)\cong\KP_{\K[H]}(\Gamma)
\end{equation}
as $\K$-algebras.
\end{proposition}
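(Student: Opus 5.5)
We construct mutually inverse homomorphisms using the universal properties of both sides, and then compose with Theorem~\ref{thm:decomposition}. Write $R=\K[H]$; it is commutative and unital.

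\emph{The map $\Phi:B\otimes_\K R\to\KP_R(\Gamma)$.} The canonical generators of $\KP_R(\Gamma)$ satisfy (KP1)--(KP4) over $\K$, since $\K\subseteq R$. The universal property of $B=\KP_\K(\Gamma)$ therefore gives a $\K$-algebra homomorphism $\phi:B\to\KP_R(\Gamma)$ sending $p_v,s_\lambda,s_{\lambda^*}$ to the elements of the same name. There is also a homomorphism $\psi:R\to\KP_R(\Gamma)$, $c\mapsto c\cdot1$. The algebra $\KP_R(\Gamma)$ is unital when $\Gamma^0$ is finite. In general one instead uses the $R$-module structure: $\KP_R(\Gamma)$ is an $R$-algebra, so scalars act centrally. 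Because the images of $\phi$ and of the scalars commute, the map $a\otimes c\mapsto c\,\phi(a)$ is a well-defined $\K$-bilinear multiplicative map, and hence defines an algebra homomorphism $\Phi$ with the stated values.

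\emph{The inverse $\Psi$.} Consider the family $Q_v=p_v\otimes1$, $T_\lambda=s_\lambda\otimes1$, $T_{\lambda^*}=s_{\lambda^*}\otimes1$ in $B\otimes_\K R$. The algebra $B\otimes_\K R$ is an $R$-algebra via $c\cdot(a\otimes c')=a\otimes cc'$. The family satisfies (KP1)--(KP4), because these relations hold in $B$ and $a\mapsto a\otimes1$ is a homomorphism. The universal property of $\KP_R(\Gamma)$ as an $R$-algebra then yields an $R$-algebra homomorphism $\Psi$ with $\Psi(c\,s_\lambda s_{\mu^*})=s_\lambda s_{\mu^*}\otimes c$.

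\emph{Inverse check.} On spanning elements, $\Phi\Psi(c\,s_\lambda s_{\mu^*})=c\,s_\lambda s_{\mu^*}$ and $\Psi\Phi(s_\lambda s_{\mu^*}\otimes c)=s_\lambda s_{\mu^*}\otimes c$. These elements span their respective algebras: the monomials $s_\lambda s_{\mu^*}$ span $\KP_R(\Gamma)$ over $R$ and $B$ over $\K$, by the standard normal-form result. So $\Phi$ and $\Psi$ are mutually inverse.

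As an alternative route to injectivity, $\Phi$ is graded for the $\Z^\ell$-grading, with $R$ placed in degree $0$. Moreover, $\Phi(r\,p_v)$ is nonzero for $0\ne r\in R$, since $p_v\otimes r\neq0$ maps to $r p_v$, which is nonzero in $\KP_R(\Gamma)$ by \cite{ArandaPinoClarkHuefRaeburn}. The graded-uniqueness theorem stated earlier then applies directly. I prefer the two-sided universal-property argument because it avoids this extra nonvanishing input.

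Finally, composing $\Phi$ with $\Theta_j$ from Theorem~\ref{thm:decomposition} gives \eqref{eq:pullback-base-change}. The only delicate step is the well-definedness of $\Phi$ and of the $R$-algebra structure in the possibly nonunital case. This is handled by noting that scalar multiplication by $R$ is central and bilinear, so no unit is needed. The naturality claim is just the functoriality of these constructions in the generators.
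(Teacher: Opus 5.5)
Your proof is correct. It is the route the paper mentions only in passing (``it can also be obtained in both directions from the universal property of the two Kumjian--Pask families''), and you carry it out in full. The paper's primary justification is different: it cites Rigby's scalar-extension isomorphism for Steinberg algebras, transported through the Clark--Pangalela identification of $\KP_R(\Gamma)$ with the Steinberg algebra of $\G_\Gamma$.

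The two routes buy different things. The citation route places the isomorphism inside the groupoid framework that the paper reuses later for isotropy and induction. Your argument is self-contained: it needs only the two universal properties, the $R$-algebra structure on $B\otimes_\K R$, and the spanning set $\{s_\lambda s_{\mu^*}\}$. It also makes the generator formulas and the nonunital case transparent.

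One slip in your optional aside about graded uniqueness. That theorem concerns homomorphisms \emph{out of} $\KP_R(\Gamma)$, so it cannot be applied to $\Phi$, whose domain $B\otimes_\K R$ is not yet known to be a Kumjian--Pask algebra over $R$. It should be applied to $\Psi$. This map is graded, and it sends $rp_v$ to $p_v\otimes r$, which is nonzero because $p_v\neq0$ in $B$ and we are tensoring over a field. Surjectivity of $\Psi$ then follows from its values on generators. This does not affect your main argument.
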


\begin{proof}
This is the scalar-extension isomorphism for Steinberg algebras
\cite[Proposition~4.1]{RigbyTensor}, transported through the standard
identification of a Kumjian--Pask algebra with its graph Steinberg algebra
\cite[Proposition~5.4]{ClarkPangalela}.  It can also be obtained in both
directions from the universal property of the two Kumjian--Pask families.
Composing it with $\Theta_j$ gives \eqref{eq:pullback-base-change}.
\end{proof}

Assume for the remainder of this section that $\Gamma^{0}$ is finite.  Put
\begin{equation}\label{eq:Uh}
 U_h=\sum_{v\in\Gamma^{0}}U_{v,h}.
\end{equation}
Then \eqref{eq:Uvh-image} gives $\Theta_j(U_h)=1\otimes u_h$.

\begin{corollary}[Central kernel units]\label{cor:central-units}
The assignment
\begin{equation}\label{eq:iota}
 \iota:\K[H]\longrightarrow\KP_{\K}(f^{*}\Gamma),
 \qquad u_h\longmapsto U_h,
\end{equation}
is an injective unital homomorphism with central image.  In particular,
\[
 U_hU_g=U_{h+g},\qquad U_h^{-1}=U_{-h}.
\]
If $\Gamma$ is cofinal and aperiodic, then
\begin{equation}\label{eq:center}
 Z\bigl(\KP_{\K}(f^{*}\Gamma)\bigr)=\iota(\K[H]).
\end{equation}
\end{corollary}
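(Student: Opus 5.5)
Everything will be transported through the isomorphism $\Theta_j$ of Theorem~\ref{thm:decomposition}. Since $\Gamma^0$ is finite, $B$ is unital with $1_B=\sum_v p_v$, and summing \eqref{eq:Uvh-image} over $v$ gives $\Theta_j(U_h)=1_B\otimes u_h$. Hence $\Theta_j\circ\iota$ is the map $c\mapsto 1_B\otimes c$ from $\K[H]$ into $B\otimes_\K\K[H]$. This map is a unital algebra homomorphism. It is injective because $1_B\neq0$, since $p_v\neq 0$ by graded uniqueness, and tensoring over a field with a nonzero vector is injective. Its image $1_B\otimes\K[H]$ is central because $\K[H]$ is commutative and $1_B$ is central in $B$. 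Since $\Theta_j$ is an isomorphism, $\iota=\Theta_j^{-1}\circ(1_B\otimes-)$ is an injective unital homomorphism with central image. The identities $U_hU_g=U_{h+g}$ and $U_hU_{-h}=U_0=1$ then follow from $u_hu_g=u_{h+g}$. Here $U_0=\sum_v p_v$ is the unit, since $q_0=0$ and $v\Gamma^0=\{v\}$.

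For the centre equality, the inclusion $\supseteq$ is already proved, so it suffices to show $Z(B\otimes\K[H])\subseteq 1_B\otimes\K[H]$. Since $\{u_h\}$ is a basis, write a central element as $z=\sum_h b_h\otimes u_h$, a finite sum with $b_h\in B$. Commuting with $a\otimes 1$ for every $a\in B$ gives
\[
 \sum_h (ab_h-b_ha)\otimes u_h=0,
\]
and linear independence of the $u_h$ forces $b_h\in Z(B)$ for every $h$. It therefore remains to show $Z(B)=\K 1_B$ when $\Gamma$ is cofinal and aperiodic with finitely many vertices.

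For this, $B$ is simple by \cite[Theorem~6.1]{ArandaPinoClarkHuefRaeburn}, so $Z(B)$ is a field extension of $\K$, because every nonzero central element generates a two-sided ideal equal to $B$ and is therefore invertible. The step I expect to be the main obstacle is showing this field is just $\K$, since simplicity alone does not give it. I would try a grading argument first. The centre of a $\Z^\ell$-graded algebra is a graded subalgebra, so it suffices to treat homogeneous central $z\in B_q$. If $q\ne0$, choose an aperiodic path and a long initial segment $\lambda$ with $z p_{r(\lambda)}\neq 0$. Comparing $s_{\lambda^*}zs_\lambda=zp_{s(\lambda)}$ with the normal form $\sum s_\alpha s_{\beta^*}$, where $d(\alpha)-d(\beta)=q$, aperiodicity should force $z=0$, in the way used for the uniqueness theorems. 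For $q=0$, the element $z$ lies in the ultramatricial algebra $B_0$ and commutes with all matrix units $s_\alpha s_{\beta^*}$ with $d(\alpha)=d(\beta)=n$. So $z$ is a scalar on each connected block, and cofinality glues these scalars into a single one.

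Alternatively, one can avoid this computation by citing the known description of the centre of a simple Kumjian--Pask (Steinberg) algebra over a unital algebra: for an effective minimal groupoid with compact unit space, the centre is $\K\cdot 1$. Combining $b_h\in\K1_B$ for all $h$ gives $z\in 1_B\otimes\K[H]=\Theta_j(\iota(\K[H]))$, which is \eqref{eq:center}.
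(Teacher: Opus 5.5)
Your argument is correct, and for the centre equality it takes a different route from the paper. The paper composes $\Theta_j$ with the scalar-extension isomorphism of Proposition~\ref{prop:base-change} and regards $A$ as the unital, basically simple algebra $\KP_{\K[H]}(\Gamma)$. It then quotes the Brown--an Huef centre theorem over the coefficient ring, which gives $Z=\K[H]1$ in one step. You stay in $B\otimes_{\K}\K[H]$ and use the elementary observation that a central element $\sum_h b_h\otimes u_h$ has every $b_h\in Z(B)$. This strips off the coefficient ring entirely, so you only need the field-level fact $Z(\KP_{\K}(\Gamma))=\K1_B$. For that fact your Steinberg-algebra citation (effective minimal groupoid with compact unit space) is legitimate. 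The paper's route is shorter but needs the centre theorem with ring coefficients. Yours needs only the more widely available field version, plus a one-line tensor reduction.

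If you prefer your self-contained graded argument, note that once you know $Z(B)$ is a field, neither step needs a separate cofinality argument. For homogeneous central $z$ of degree $q\ne0$, the aperiodicity step of the Cuntz--Krieger uniqueness proof gives $\lambda$ with $0=s_{\lambda^*}zs_\lambda=zp_{s(\lambda)}$, which is impossible for an invertible $z$. For $q=0$, write $z=\sum_w c_w\sum_{\alpha\in\Gamma^nw}s_\alpha s_{\alpha^*}$. Then $(z-c_w1_B)s_\alpha=0$ with $s_\alpha\ne0$ for $\alpha\in\Gamma^nw$, so the central element $z-c_w1_B$ is not invertible and must vanish.

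A minor point: the nonvanishing of $p_v$ in $\KP_{\K}(\Gamma)$ comes from the infinite-path representation, not from graded uniqueness itself.
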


\begin{proof}
Under $\Theta_j$, the proposed map is the inclusion
$c\mapsto1\otimes c$.  This proves the first assertions.  For the last one,
use \eqref{eq:pullback-base-change}.  Since $\Gamma$ is cofinal and
aperiodic, $\KP_{\K[H]}(\Gamma)$ is basically simple, and its finite vertex
set gives an identity.  The center theorem
\cite[Theorem~4.7]{BrownHuef} identifies its center with
$\K[H]1$.
\end{proof}

\begin{remark}\label{rem:infinite-vertices}
The finite-vertex hypothesis is not needed for
Theorem~\ref{thm:decomposition}.  When $\Gamma^{0}$ is infinite, the local
elements $U_{v,h}$ remain in the algebra, while the formal sum of all of them
belongs naturally to a multiplier algebra.  We therefore make no claim that
$U_h$ is an algebra element in that case.
\end{remark}

\section{Ideals and primitive spectra}\label{sec:ideals}

In this section we assume that $\Gamma$ is row-finite, has no sources and
finitely many vertices, and is cofinal and aperiodic.  Set
\begin{equation}\label{eq:A-C}
 A=\KP_{\K}(f^{*}\Gamma),\qquad C=\K[H].
\end{equation}
We regard $C$ as the central subalgebra $\iota(C)$ of $A$ from
Corollary~\ref{cor:central-units}.

For an ideal $J$ of $C$, define
\begin{equation}\label{eq:IJ}
 I_J=A\iota(J)=\iota(J)A.
\end{equation}
Under $\Theta_j$, this is the ideal $B\otimes_{\K}J$.

\begin{theorem}[Coefficient control of ideals]\label{thm:ideal-lattice}
The map
\begin{equation}\label{eq:ideal-map}
 \mathcal L(C)\longrightarrow\mathcal L(A),
 \qquad J\longmapsto I_J,
\end{equation}
is a lattice isomorphism.  Its inverse is
\begin{equation}\label{eq:ideal-inverse}
 I\longmapsto\iota^{-1}(I\cap\iota(C)).
\end{equation}
Moreover, for every proper ideal $J$ of $C$ there is an isomorphism
\begin{equation}\label{eq:quotient}
 A/I_J\cong\KP_{C/J}(\Gamma).
\end{equation}
\end{theorem}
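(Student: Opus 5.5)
Throughout I work on the target side of Theorem~\ref{thm:decomposition}, where $A\cong B\otimes_{\K}C$ and $I_J$ corresponds to $B\otimes_{\K}J$. Composed with Proposition~\ref{prop:base-change}, $A$ becomes $\KP_C(\Gamma)$ and $I_J$ becomes $J\cdot\KP_C(\Gamma)$. I will establish three things, in this order: the quotient isomorphism \eqref{eq:quotient}, the identity $\iota^{-1}(I_J\cap\iota(C))=J$, and the key fact that every ideal $I$ of $A$ equals $I_{J}$ for $J=\iota^{-1}(I\cap\iota(C))$. Together these give mutually inverse, inclusion-preserving maps, hence a lattice isomorphism.

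\emph{Quotient and retraction.} Since $B$ is free as a $\K$-vector space, tensoring the exact sequence $0\to J\to C\to C/J\to0$ with $B$ stays exact. Hence $A/I_J\cong B\otimes_{\K}C/J$. Applying Proposition~\ref{prop:base-change} with coefficient ring $C/J$ (the same scalar-extension argument works for any commutative unital $\K$-algebra) identifies this with $\KP_{C/J}(\Gamma)$, proving \eqref{eq:quotient}. For the retraction, $B\otimes J\cap 1\otimes C=1\otimes J$: choose a $\K$-basis of $B$ containing $1$ (this uses that $\Gamma^0$ is finite, so $B$ is unital) and read off coordinates. Thus $\iota^{-1}(I_J\cap\iota(C))=J$.

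\emph{Surjectivity, the main obstacle.} Let $I$ be an ideal, $J$ as above, and pass to $\bar A=A/I_J\cong\KP_{C/J}(\Gamma)$ with image ideal $\bar I$. By construction $\bar I\cap (C/J)1=0$, and I must show $\bar I=0$. My plan is a reduction argument exploiting simplicity of $B$ (from \cite[Theorem~6.1]{ArandaPinoClarkHuefRaeburn}). Write $R=C/J$ and suppose $0\ne x\in\bar I\subseteq B\otimes R$. Expand $x=\sum_{i=1}^t b_i\otimes r_i$ with the $r_i$ $\K$-linearly independent in $R$ and $t$ minimal among nonzero elements of $\bar I$. Since $B$ is simple and unital, there exist $a_k,c_k\in B$ with $\sum_k a_kb_1c_k=1$. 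Replacing $x$ by $\sum_k(a_k\otimes1)x(c_k\otimes1)$ keeps $x$ in $\bar I$, keeps the same $r_i$, and makes $b_1=1$; the new element is still nonzero because $1\otimes r_1\ne0$ by linear independence.

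Next I show every $b_i$ is a scalar. For any $a\in B$, the commutator $(a\otimes1)x-x(a\otimes1)=\sum_{i\ge2}[a,b_i]\otimes r_i$ has fewer terms, so by minimality of $t$ it vanishes. Hence each $b_i$ is central in $B$. Because $B$ is simple and unital with $Z(B)=\K1$, by \cite[Theorem~4.7]{BrownHuef} applied with $H=0$, each $b_i=\beta_i1$. Then $x=1\otimes\sum\beta_ir_i$ is a nonzero element of $\bar I\cap R1$, a contradiction. Therefore $\bar I=0$, i.e.\ $I=I_J$.

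The step needing the most care is this minimal-length argument, specifically the claim that the sandwiching keeps $x$ nonzero while preserving the form $\sum b_i\otimes r_i$ with the same coefficients. Working in the $R$-free presentation $B\otimes_{\K}R$, rather than in $\KP_R(\Gamma)$ abstractly, makes this transparent.
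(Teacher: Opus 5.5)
Your proof is correct, but it takes a different route from the paper's.

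\textbf{The paper's route.} It transports $A$ to $\KP_C(\Gamma)$ via Proposition~\ref{prop:base-change}. It then invokes the coefficient-ring ideal theorem \cite[Proposition~6.4(b)]{ArandaPinoClarkHuefRaeburn} for cofinal aperiodic graphs, and \cite[Proposition~6.3]{ArandaPinoClarkHuefRaeburn} for the quotient. Its only extra check is that, because $\Gamma^0$ is finite, the restriction map in that theorem is intersection with $\iota(C)$.

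\textbf{Your route.} You stay inside $B\otimes_{\K}C$. The quotient and the retraction come from exactness of $B\otimes_{\K}-$ and from a $\K$-basis of $B$ containing $1$. Surjectivity comes from the classical minimal-length argument: every ideal of $B\otimes_{\K}R$ is extended from $R$ whenever $B$ is unital, simple and satisfies $Z(B)=\K1$. The steps check out:
\begin{itemize}
\item $\bar I\cap R1=0$ holds by the definition of $J$.
\item Sandwiching to make $b_1=1$ preserves both nonvanishing and length, because the $r_i$ are $\K$-linearly independent.
\item The commutator has length $\le t-1$, which forces each $b_i$ to be central.
\end{itemize}
The citation ``applied with $H=0$'' should read as the centre theorem with coefficient ring $\K$, or equivalently Corollary~\ref{cor:central-units} with $f=\id$.

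\textbf{Comparison.} Your argument is self-contained. The only graph-theoretic inputs are the simplicity of $B$ and the centre computation. It also works verbatim with $\K[H]$ replaced by any $\K$-algebra, commutative or not. It shows, too, that the obstruction alluded to in Remark~\ref{rem:not-formal} is a centre strictly larger than $\K$, not infinite-dimensionality: cofinality and aperiodicity enter only through ``$B$ is central simple''.

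The paper's route needs neither the tensor decomposition nor the centre theorem. Its underlying theorem also covers arbitrary commutative coefficient rings, not just $\K$-algebras of the form $C$. The price is that the whole content is a black-box citation.
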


\begin{proof}
By Proposition~\ref{prop:base-change}, $A$ is isomorphic to
$\KP_C(\Gamma)$.  Since $\Gamma$ is cofinal and aperiodic,
\cite[Proposition~6.4(b)]{ArandaPinoClarkHuefRaeburn} gives a lattice
isomorphism from the ideals of $C$ to the ideals of $\KP_C(\Gamma)$.  It
sends $J$ to
\[
 \Span_C\{c s_\alpha s_{\beta^{*}}:
 c\in J,\ \alpha,\beta\in\Gamma,\ s(\alpha)=s(\beta)\},
\]
which corresponds to $B\otimes_{\K}J$, hence to $I_J$.  Since $\Gamma^0$ is
finite,
\[
 \operatorname{Res}(I)
 =\{c\in C:cp_v\in I\text{ for every }v\}
 =\{c\in C:c1\in I\}
 =\iota^{-1}(I\cap\iota(C)).
\]
Thus the restriction map in that proposition is intersection with the
central coefficient copy, which gives \eqref{eq:ideal-inverse}.
The quotient formula is
\cite[Proposition~6.3]{ArandaPinoClarkHuefRaeburn}, transported through the
same isomorphism.
\end{proof}

\begin{remark}\label{rem:not-formal}
Theorem~\ref{thm:ideal-lattice} is not inferred merely from the simplicity of
$B=\KP_{\K}(\Gamma)$.  For general infinite-dimensional simple
$\K$-algebras, ideals of $B\otimes_{\K}C$ need not automatically be extended
from $C$.  The cofinality and aperiodicity of $\Gamma$, through the
coefficient-ring theorem cited in the proof, are essential to the argument.
An alternative proof uses the ideal theorem for strongly effective Steinberg
groupoids in \cite{ClarkEdieHuefSims}.
\end{remark}

Recall that $\Spec R$ denotes the set of proper prime ideals of a ring $R$,
with the hull--kernel topology, and $\Prim R$ denotes the annihilators of
simple left $R$-modules.  For a commutative ring $C$,
$\Prim C=\MaxSpec C$.

\begin{theorem}[Prime and primitive spectra]\label{thm:spectra}
The ideal map in \eqref{eq:ideal-map} restricts to a homeomorphism
\begin{equation}\label{eq:spec-homeo}
 \Spec C\longrightarrow\Spec A,
 \qquad J\longmapsto I_J.
\end{equation}
For every field $\K$, the map
\begin{equation}\label{eq:prim-homeo}
 \MaxSpec C\longrightarrow\Prim A,
 \qquad \mathfrak m\longmapsto I_{\mathfrak m},
\end{equation}
is a homeomorphism onto its image.  If $\K$ is uncountable and algebraically
closed, \eqref{eq:prim-homeo} is surjective and hence a homeomorphism.
\end{theorem}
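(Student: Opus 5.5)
The plan is to deduce everything from the lattice isomorphism of Theorem~\ref{thm:ideal-lattice}, together with the quotient formula \eqref{eq:quotient}.

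\emph{Prime spectrum.} The map $J\mapsto I_J$ is an order isomorphism $\mathcal L(C)\to\mathcal L(A)$. It therefore preserves properness, since $I_C=A$. It also preserves products of ideals in the sense needed for primeness: because $C$ is central, $I_JI_{J'}=A\iota(J)\iota(J')A=I_{JJ'}$. An ideal $P$ of $A$ is prime iff $I_1I_2\subseteq P$ implies $I_1\subseteq P$ or $I_2\subseteq P$. Writing $P=I_J$, $I_1=I_{J_1}$ and $I_2=I_{J_2}$, this says exactly that $J_1J_2\subseteq J$ forces $J_1\subseteq J$ or $J_2\subseteq J$, i.e.\ that $J$ is prime in the commutative ring $C$. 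So \eqref{eq:spec-homeo} is a bijection. For the topology, a hull--kernel closed set has the form $\{P: P\supseteq I\}$. Its preimage is $\{J:J\supseteq\iota^{-1}(I\cap\iota(C))\}$, and order preservation in both directions matches closed sets exactly. Hence \eqref{eq:spec-homeo} is a homeomorphism.

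\emph{Primitive ideals are the $I_{\mathfrak m}$.} For $\mathfrak m$ maximal, \eqref{eq:quotient} gives $A/I_{\mathfrak m}\cong\KP_{C/\mathfrak m}(\Gamma)$. Since $C/\mathfrak m$ is a field and $\Gamma$ is cofinal and aperiodic, this algebra is simple by \cite[Theorem~6.1]{ArandaPinoClarkHuefRaeburn}. It is also unital because $\Gamma^0$ is finite. A unital simple ring has a maximal left ideal, and the corresponding simple module has annihilator $0$, so $I_{\mathfrak m}$ is primitive. The map \eqref{eq:prim-homeo} is injective by the lattice isomorphism. It is a homeomorphism onto its image because the topology of $\Prim A$ is the subspace topology from $\Spec A$, since primitive ideals are prime. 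Likewise $\MaxSpec C$ carries the subspace topology from $\Spec C$, so the claim follows by restricting \eqref{eq:spec-homeo}.

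\emph{Surjectivity for uncountable algebraically closed $\K$.} This is the main step. Let $P=\Ann V$ with $V$ a simple $A$-module. The algebra $A$ has countable dimension: $\Gamma$ is countable, so the spanning set $\{s_\alpha s_{\beta^*}\}$ is countable. Lemma~\ref{lem:Dixmier} therefore gives $\operatorname{End}_A(V)=\K\id_V$. Each central unit $U_h$ acts as an $A$-endomorphism of $V$, hence as a scalar $\chi(h)\in\K^\times$. This defines a character $\chi$ of $C$, whose kernel $\mathfrak m$ is a maximal ideal with $\iota(\mathfrak m)\subseteq P$. Consequently $I_{\mathfrak m}\subseteq P$. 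But $P$ is proper and $I_{\mathfrak m}$ corresponds to the maximal element $\mathfrak m$ among proper ideals of $C$. Since $P=I_J$ for some proper $J\supseteq\mathfrak m$, we get $J=\mathfrak m$ and $P=I_{\mathfrak m}$. This completes the bijection, and the homeomorphism follows from the previous paragraph.
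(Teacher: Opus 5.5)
Your proof is correct. It differs from the paper's mainly in how you identify the prime ideals.

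\textbf{The prime-spectrum step.} The paper uses the quotient formula \eqref{eq:quotient} together with the prime criterion of \cite[Theorem~3.3]{KashoulLarkiAminpour}: $A/I_J$ is prime exactly when $C/J$ is a domain and $\Gamma^0$ satisfies (MT3). It then checks that cofinality implies (MT3). You stay entirely inside the lattice isomorphism of Theorem~\ref{thm:ideal-lattice}. Centrality of $\iota(C)$ and unitality of $A$ give $I_{J_1}I_{J_2}=I_{J_1J_2}$. Since every ideal of $A$ is some $I_{J'}$, the ideal-theoretic definition of primeness in $A$ translates directly into primeness of $J$ in the commutative ring $C$.

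\textbf{What each route buys.} Yours is more elementary and self-contained. It needs no external graph-theoretic prime criterion and no (MT3) verification. It would work for any unital algebra whose ideals are all extended from a central subalgebra in this way. The paper's route instead ties primeness to the graph condition (MT3). That is the mechanism it reuses in Section~\ref{sec:tails}, where cofinality is dropped and the coefficient-controlled lattice isomorphism is no longer available.

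\textbf{The remaining steps.} These match the paper up to presentation. You spell out why a unital simple quotient makes $I_{\mathfrak m}$ primitive, which the paper leaves implicit. You phrase the topology through subspace topologies on $\Prim A\subseteq\Spec A$ and $\MaxSpec C\subseteq\Spec C$. For surjectivity, you show $\ker\chi\subseteq J$ and then use maximality of $\ker\chi$ with properness of $J$. The paper instead shows the kernel equals $J$ and concludes that $C/J$ embeds in $\K$.
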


\begin{proof}
Let $J$ be a proper ideal of $C$.  By \eqref{eq:quotient} and
\cite[Theorem~3.3]{KashoulLarkiAminpour}, the quotient $A/I_J$ is prime if
and only if $C/J$ is an integral domain and $\Gamma^{0}$ satisfies (MT3).
Cofinality implies (MT3): given $v,w\in\Gamma^{0}$, choose an infinite path
$x\in w\Gamma^{\infty}$; cofinality gives a path from $v$ to some $x(n)$,
while the initial segment of $x$ gives a path from $w$ to the same vertex.
It follows that $I_J$ is prime exactly when $J$ is prime.

This proves the set-theoretic assertion for prime ideals.  If
$\mathfrak m\in\MaxSpec C$, then \eqref{eq:quotient} gives
\[
 A/I_{\mathfrak m}\cong
 \KP_{C/\mathfrak m}(\Gamma).
\]
The algebra on the right is simple because $C/\mathfrak m$ is a field and
$\Gamma$ is cofinal and aperiodic.  Hence $I_{\mathfrak m}$ is primitive,
and injectivity follows from Theorem~\ref{thm:ideal-lattice}.

We next check the topologies.  Every ideal of $A$ is $I_Q$ for a unique
ideal $Q$ of $C$, and
\[
 I_J\supseteq I_Q\quad\Longleftrightarrow\quad J\supseteq Q.
\]
Consequently, the inverse image of the hull
$\{I_J:I_J\supseteq I_Q\}$ is the Zariski closed set
$V_C(Q)=\{J:J\supseteq Q\}$.  This proves \eqref{eq:spec-homeo} and also
shows that \eqref{eq:prim-homeo} is a homeomorphism onto its image.

Suppose now that $\K$ is uncountable and algebraically closed, and let
$P\in\Prim A$.  By Theorem~\ref{thm:ideal-lattice}, $P=I_J$ for a unique
ideal $J$ of $C$.  The algebra $A/P$ is countable-dimensional over $\K$.
Let $V$ be a faithful simple $A/P$-module.  Lemma~\ref{lem:Dixmier} gives
$\operatorname{End}_{A/P}(V)=\K\id_V$.  The central action of $C$ on $V$
therefore defines a unital $\K$-algebra map $C\to\K$.  Its kernel is
\[
 \{c\in C:c1\in P\}=J
\]
by \eqref{eq:ideal-inverse}.  Hence $C/J$ embeds into $\K$ as a unital
$\K$-algebra and must equal $\K$.  Thus $J$ is maximal and
\eqref{eq:prim-homeo} is surjective.
\end{proof}

Since $H$ is free abelian, a choice of basis $h_1,\ldots,h_r$ gives
\begin{equation}\label{eq:Laurent}
 C\cong\K[z_1^{\pm1},\ldots,z_r^{\pm1}],
 \qquad u_{h_i}\longmapsto z_i.
\end{equation}

\begin{corollary}[The primitive torus]\label{cor:torus}
Suppose that $\K$ is uncountable and algebraically closed and
$r=\rank H$.  Then
\begin{equation}\label{eq:primitive-torus}
 \Prim A\cong(\K^{\times})^{r}
\end{equation}
with the Zariski topology.  If
$a=(a_1,\ldots,a_r)\in(\K^{\times})^{r}$ and
$\chi_a:H\to\K^{\times}$ is determined by $\chi_a(h_i)=a_i$, let
$\widehat\chi_a:C\to\K$ be its linear extension and write
$I_{\chi_a}=I_{\ker\widehat\chi_a}$.  The corresponding primitive ideal is
\begin{equation}\label{eq:Pchi-central}
 P_a=I_{\chi_a}
 =\sum_{i=1}^{r}A(U_{h_i}-a_i1).
\end{equation}
Equivalently, $P_a$ is generated by
\begin{equation}\label{eq:Pchi-path}
 s_{(\lambda,n)}-\chi_a(n-m)s_{(\lambda,m)},
\end{equation}
where $n,m\in\N^{k}$ and
$f(n)=f(m)=d(\lambda)$.
\end{corollary}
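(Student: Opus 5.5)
The plan is to combine Theorem~\ref{thm:spectra} with the Nullstellensatz for the Laurent ring \eqref{eq:Laurent}. Since $\K$ is algebraically closed and $C\cong\K[z_1^{\pm1},\ldots,z_r^{\pm1}]$ is finitely generated, Hilbert's Nullstellensatz shows that every maximal ideal has residue field $\K$. Hence every maximal ideal is the kernel of a unital $\K$-algebra map $C\to\K$. Such a map is determined by the images $a_i$ of $z_i$, and these images are nonzero because the $z_i$ are units. Thus $\MaxSpec C$ is in bijection with $(\K^\times)^r$ via $a\mapsto\ker\widehat\chi_a=(z_1-a_1,\ldots,z_r-a_r)$. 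This bijection is the standard identification of $\MaxSpec C$ with the principal open subset $\{z_1\cdots z_r\ne0\}$ of $\K^r$. I would check that the Zariski topology on $\MaxSpec C$ corresponds to the Zariski topology on the torus: a closed set $V(Q)\cap\MaxSpec C$ corresponds to the common zeros in $(\K^\times)^r$ of the Laurent polynomials in $Q$. Composing with the homeomorphism \eqref{eq:prim-homeo} from Theorem~\ref{thm:spectra} gives \eqref{eq:primitive-torus}.

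For \eqref{eq:Pchi-central}, I would note that $\ker\widehat\chi_a$ is generated as an ideal of $C$ by the elements $u_{h_i}-a_i$. Since $\iota$ is unital and sends $u_{h_i}$ to $U_{h_i}$, we get $I_{\ker\widehat\chi_a}=A\iota(\ker\widehat\chi_a)=\sum_iA(U_{h_i}-a_i1)$. The same argument shows that $I_{\ker\widehat\chi_a}$ is generated by the elements $U_h-\chi_a(h)1$ for all $h\in H$.

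For the path description \eqref{eq:Pchi-path}, I would work under $\Theta_j$. Take $n,m$ with $f(n)=f(m)=d(\lambda)$. By \eqref{eq:theta-real}, $\Theta_j(s_{(\lambda,n)})=(1\otimes u_{n-m})\Theta_j(s_{(\lambda,m)})$ with $n-m\in H$, so $s_{(\lambda,n)}=U_{n-m}s_{(\lambda,m)}$. The displayed generator therefore equals $(U_{n-m}-\chi_a(n-m)1)s_{(\lambda,m)}$, which lies in $P_a$. Conversely, let $P'$ be the ideal generated by the elements \eqref{eq:Pchi-path}. I want to show that $U_h-\chi_a(h)1\in P'$ for each $h$. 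Apply the generator with $\lambda$ a path of degree $q_h$, $n=h_+$ and $m=h_-$. Multiply on the right by $s_{(\lambda,h_-)^*}$, then sum over $\lambda\in v\Gamma^{q_h}$ and over $v$. Using \eqref{eq:Uvh}, (KP4), and the trivial identity $v\Gamma^0=\{v\}$ when $q_h=0$, the result is
\[
 \sum_{v}\sum_{\lambda\in v\Gamma^{q_h}}\bigl(s_{(\lambda,h_+)}-\chi_a(h)s_{(\lambda,h_-)}\bigr)s_{(\lambda,h_-)^{*}}
 =U_h-\chi_a(h)\sum_{v}p_v=U_h-\chi_a(h)1.
\]
This gives $P_a\subseteq P'$, and hence $P'=P_a$.

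The only step with real content is the topological identification of $\MaxSpec C$ with $(\K^\times)^r$. This is standard, since it is the Nullstellensatz applied to the principal open set. Everything else is routine bookkeeping with the central units $U_h$.
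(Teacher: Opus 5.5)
Your proposal is correct and follows the paper's proof. The Nullstellensatz identifies $\MaxSpec C$ with $(\K^\times)^r$, and Theorem~\ref{thm:spectra} transports this to $\Prim A$; for the path description, one inclusion comes from passing the generators through $\Theta_j$, and the other from multiplying the $(n,m)=(h_+,h_-)$ relations by $s_{(\lambda,h_-)^{*}}$ and summing via (KP4) to obtain $U_h-\chi_a(h)1$. Your explicit check that the two Zariski topologies agree, and your rewriting $s_{(\lambda,n)}=U_{n-m}s_{(\lambda,m)}$, are only minor elaborations of that same argument.
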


\begin{proof}
By the weak Nullstellensatz and \eqref{eq:Laurent}, the maximal ideals of
$C$ are
\[
 \mathfrak m_a=(u_{h_1}-a_1,\ldots,u_{h_r}-a_r),
 \qquad a\in(\K^{\times})^r.
\]
Theorem~\ref{thm:spectra} and \eqref{eq:iota} give
\eqref{eq:primitive-torus}--\eqref{eq:Pchi-central}.

Let $Q_a$ be the ideal generated by the elements in
\eqref{eq:Pchi-path}.  Their images under $\Theta_j$ are
\[
 s_\lambda\otimes u_{m-j(d(\lambda))}
 \bigl(u_{n-m}-\chi_a(n-m)\bigr),
\]
so $Q_a\subseteq P_a$.  Conversely, for $h\in H$, use
$n=h_+$, $m=h_-$ and sum the corresponding relations, after multiplying on
the right by $s_{(\lambda,h_-)^{*}}$, over
$\lambda\in\Gamma^{f(h_+)}$.  Relation (KP4), or the identity
$\sum_{v\in\Gamma^0}p_v=1$ when $f(h_+)=0$, gives
$U_h-\chi_a(h)1\in Q_a$.  Taking $h=h_1,\ldots,h_r$ proves the reverse
inclusion.
\end{proof}

\begin{corollary}\label{cor:noetherian}
Every ideal of $A$ is finitely generated as a two-sided ideal, the ideal
lattice of $A$ satisfies the ascending-chain condition, and the topological
Krull dimension of $\Spec A$ is $r$.
\end{corollary}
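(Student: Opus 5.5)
Everything will be transferred from $C=\K[H]$ to $A$ through the lattice isomorphism $J\mapsto I_J$ of Theorem~\ref{thm:ideal-lattice}, together with the Laurent presentation \eqref{eq:Laurent}. Since $H$ is free abelian of rank $r=k-\ell$, \eqref{eq:Laurent} identifies $C$ with $\K[z_1^{\pm1},\ldots,z_r^{\pm1}]$. This is the localisation of the polynomial ring $\K[z_1,\ldots,z_r,w]$ at $z_1\cdots z_r$, or equivalently the quotient by $(z_1\cdots z_r w-1)$. Hence $C$ is a finitely generated commutative $\K$-algebra, and Hilbert's basis theorem shows that $C$ is Noetherian.

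\textbf{Finite generation.} Let $I$ be an ideal of $A$. By Theorem~\ref{thm:ideal-lattice}, $I=I_J$ with $J=\iota^{-1}(I\cap\iota(C))$. Since $C$ is Noetherian, $J=(c_1,\ldots,c_t)$ for some $c_1,\ldots,c_t\in C$. Because $\iota(C)$ is central, $I_J=A\iota(J)=\sum_i A\iota(c_i)$, so $I$ is generated as a two-sided ideal by $\iota(c_1),\ldots,\iota(c_t)$.

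\textbf{Ascending-chain condition.} The map $J\mapsto I_J$ is a lattice isomorphism, so it preserves and reflects inclusion. An ascending chain of ideals of $A$ therefore corresponds to an ascending chain in $C$, which stabilises.

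\textbf{Krull dimension.} Theorem~\ref{thm:spectra} gives a homeomorphism $\Spec C\to\Spec A$, and topological Krull dimension (the supremum of lengths of chains of irreducible closed subsets) is a topological invariant. It therefore suffices to compute $\dim\Spec C$. For a commutative ring, irreducible closed subsets of $\Spec C$ are exactly the sets $V(\mathfrak p)$ with $\mathfrak p$ prime, so the topological dimension coincides with the ring-theoretic Krull dimension. Since $C$ is a localisation of $\K[z_1,\ldots,z_r]$ at a nonzero element, it is an affine domain with fraction field $\K(z_1,\ldots,z_r)$, of transcendence degree $r$. By Noether normalisation, every affine domain has Krull dimension equal to its transcendence degree, so $\dim C=r$. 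One can also see this directly by exhibiting an explicit chain of length $r$:
\[
 0\subsetneq(z_1-1)\subsetneq(z_1-1,z_2-1)\subsetneq\cdots\subsetneq(z_1-1,\ldots,z_r-1).
\]

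\textbf{Main obstacle.} The one point needing care is the topological Krull dimension. We must check that the homeomorphism of Theorem~\ref{thm:spectra} carries irreducible closed subsets of $\Spec C$ to irreducible closed subsets of $\Spec A$ (any homeomorphism does), and we must cite the fact that affine domains have Krull dimension equal to transcendence degree. The remaining steps are formal consequences of Theorem~\ref{thm:ideal-lattice}. The argument works over every field $\K$, because Theorem~\ref{thm:spectra} provides the homeomorphism on $\Spec$ without any cardinality or closure hypothesis.
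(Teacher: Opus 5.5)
Your proposal is correct and is essentially the paper's argument. The paper notes only that $C\cong\K[z_1^{\pm1},\ldots,z_r^{\pm1}]$ is Noetherian of Krull dimension $r$ and transfers this through Theorems~\ref{thm:ideal-lattice} and~\ref{thm:spectra}, which is exactly what you spell out. Two small slips: $C$ is the localisation of $\K[z_1,\ldots,z_r]$, not of $\K[z_1,\ldots,z_r,w]$, at $z_1\cdots z_r$; and your explicit chain of primes only gives the lower bound $\dim C\ge r$, so the upper bound still rests on the transcendence-degree (Noether normalisation) argument.
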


\begin{proof}
The Laurent polynomial ring in \eqref{eq:Laurent} is noetherian and has
Krull dimension $r$.  Apply Theorems~\ref{thm:ideal-lattice} and
\ref{thm:spectra}.
\end{proof}

\begin{remark}\label{rem:nonclosed-field}
Over an arbitrary field, $\MaxSpec\K[H]$ still parametrises the explicit
primitive ideals in \eqref{eq:prim-homeo}, but we do not assert here that it
exhausts $\Prim A$ without the central-character conclusion supplied by
Lemma~\ref{lem:Dixmier}.  Moreover, maximal ideals need not be
$\K$-valued characters.  For example, if $\K=\mathbb R$ and $H\cong\Z$,
the maximal ideal $(z^2+1)$ of $\mathbb R[z^{\pm1}]$ has residue field
$\mathbb C$ and does not come from a homomorphism
$H\to\mathbb R^{\times}$.
\end{remark}

\section{Maximal tails and character fibres}\label{sec:tails}

We now drop cofinality.  Throughout this section, $\Gamma$ is row-finite,
has no sources and finitely many vertices, and is strongly aperiodic.  We
retain the notation
\[
 A=\KP_{\K}(f^*\Gamma),\qquad C=\K[H],
\]
and identify $C$ with the central subalgebra $\iota(C)$ of $A$.  For
$T\in\MT(\Gamma)$, put
\begin{equation}\label{eq:HT-GammaT}
 H_T=\Gamma^0\setminus T,
 \qquad \Gamma_T=\Gamma\setminus H_T,
\end{equation}
and let $I_{H_T}$ denote the ideal of $A$ generated by
$\{p_v:v\in H_T\}$.  The letter $H$ without a subscript continues to denote
the group $\ker\bar f$; the set $H_T$ is a saturated hereditary vertex set.

For $\mathfrak m\in\MaxSpec C$, define
\begin{equation}\label{eq:PTm}
 P_{T,\mathfrak m}=I_{H_T}+A\iota(\mathfrak m).
\end{equation}
This definition is independent of the chosen group section $j$.  Indeed,
both the vertex generators and the kernel units defining $\iota$ are
intrinsic to the pullback graph.

\begin{theorem}[Primitive ideals and exhaustivity]\label{thm:tail-classification}
The assignment
\begin{equation}\label{eq:tail-bijection}
 \Phi:\MT(\Gamma)\times\MaxSpec C\longrightarrow\Prim A,
 \qquad (T,\mathfrak m)\longmapsto P_{T,\mathfrak m},
\end{equation}
is well defined and injective for every field $\K$.  For every pair
$(T,\mathfrak m)$ there is an isomorphism
\begin{equation}\label{eq:tail-quotient}
 A/P_{T,\mathfrak m}
 \cong\KP_{\kappa(\mathfrak m)}(\Gamma_T),
 \qquad \kappa(\mathfrak m)=C/\mathfrak m.
\end{equation}
If $\K$ is uncountable and algebraically closed, then $\Phi$ is bijective.
\end{theorem}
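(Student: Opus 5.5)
We pass through the isomorphism $A\cong\KP_C(\Gamma)$ of Proposition~\ref{prop:base-change}, which sends $p_v$ to $p_v$ and $\iota(c)$ to $c1$.

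\textbf{Step 1: the quotient formula.} Under this identification $I_{H_T}$ is the ideal of $\KP_C(\Gamma)$ generated by the vertex projections in the saturated hereditary set $H_T$. The standard quotient theorem for Kumjian--Pask algebras over a coefficient ring gives
\[
 \KP_C(\Gamma)/I_{H_T}\cong\KP_C(\Gamma_T).
\]
It should be proved via graded uniqueness: the quotient map kills $p_v$ for $v\in H_T$, and the edges of $\Gamma_T$ satisfy (KP4) because $H_T$ is hereditary. Reducing further modulo $\mathfrak m$, and using $\KP_C(\Gamma_T)/\mathfrak m\KP_C(\Gamma_T)\cong\KP_{C/\mathfrak m}(\Gamma_T)$ (from the universal property in both directions), gives \eqref{eq:tail-quotient}.

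\textbf{Step 2: primitivity.} $H_T$ is a proper saturated hereditary set, so strong aperiodicity makes $\Gamma_T$ aperiodic. Property (MT3) of $T$, with finiteness of $T$, gives a vertex $z\in T$ reached from every vertex of $T$. Every infinite path in $\Gamma_T$ is eventually seen from $z$ after iterating (MT3), which should give cofinality of $\Gamma_T$. The cleanest formulation is: $\KP_{\kappa}(\Gamma_T)$ has no nontrivial saturated hereditary sets, since any nonempty such set $L$ must contain a vertex below $z$, hence $L\ni z$ by saturation/hereditary arguments, and then $L=T$ by (MT1)-type arguments in $\Gamma_T$. By \cite[Theorem~6.1]{ArandaPinoClarkHuefRaeburn} over the field $\kappa(\mathfrak m)$, the quotient is simple and unital, hence primitive. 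So $\Phi$ is well defined.

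\textbf{Step 3: injectivity.} From the quotient we read off $T=\{v:p_v\notin P_{T,\mathfrak m}\}$, because $p_v\ne0$ in $\KP_\kappa(\Gamma_T)$ for $v\in T$. We also read off $\mathfrak m=\iota^{-1}(P_{T,\mathfrak m}\cap\iota(C))$, because $c1$ maps to the scalar $c+\mathfrak m$ times the unit. Hence $\Phi$ is injective.

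\textbf{Step 4: surjectivity (the main obstacle).} Assume $\K$ is uncountable and algebraically closed, and let $P\in\Prim A$. First, Lemma~\ref{lem:Dixmier} applied to a faithful simple $A/P$-module shows $\iota(C)$ acts by a character, so $\mathfrak m:=\iota^{-1}(P\cap\iota(C))$ is maximal with $\kappa(\mathfrak m)=\K$. Next set $H_P=\{v:p_v\in P\}$, which is saturated and hereditary. Put $T=\Gamma^0\setminus H_P$; it is nonempty since $1\notin P$. Primeness of $P$ yields (MT3): if $v,w\in T$ have no common descendant in $T$, then $p_vAp_w\subseteq P$, contradicting primeness. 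So $T\in\MT(\Gamma)$ and $P\supseteq P_{T,\mathfrak m}$. Since $A/P_{T,\mathfrak m}\cong\KP_\K(\Gamma_T)$ is simple by Step 2 and $P$ is proper, equality holds.

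The delicate points are the (MT3)$\Rightarrow$cofinality argument in Step 2 and verifying the coefficient-ring quotient isomorphism of Step 1 without cofinality. I would handle the latter by graded uniqueness over the field $\kappa(\mathfrak m)$ applied directly to $A/P_{T,\mathfrak m}\to\KP_\kappa(\Gamma_T)$, constructing the inverse from the universal property.
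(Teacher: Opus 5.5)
\textbf{Gap: Step~2 claims simplicity, but the tail quotient need not be simple.} Step~2 asserts that $\Gamma_T$ is cofinal, i.e.\ that $\Gamma_T$ has no nontrivial saturated hereditary vertex sets, so that $\KP_{\kappa(\mathfrak m)}(\Gamma_T)$ is simple. This is false for maximal tails in general, and the error propagates to the end of Step~4.

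The failing step is ``$L\ni z$, hence $L=T$ by (MT1)-type arguments''. Condition (MT1) is an \emph{upward} closure property of $T$, whereas a hereditary set $L$ is only closed \emph{downward}. So knowing $z\in L$ tells you nothing about the vertices above $z$.

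Example~\ref{ex:two-tails} is a counterexample. Take $T_0=\{v,w\}$, so $\Gamma_{T_0}=E$ and the common descendant is $z=w$. The set $L=\{w\}$ is hereditary and saturated; saturation never forces $v\in L$, because $s(vE^n)$ always contains $v$. Equivalently, the infinite path $a_1a_1a_1\cdots$ is never reached from $w$. The paper states explicitly that $A/P_{T_0,a}\cong\KP_\K(E)$ is primitive but not simple. What (MT3) and finiteness actually give is a single vertex $z$ reachable from all of $T$. Hence \emph{some} infinite paths (those through $z$) are reached from every vertex of $T$, which is not cofinality.

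\textbf{Consequences.} Your argument therefore establishes neither the primitivity of $P_{T,\mathfrak m}$ (so $\Phi$ is not shown to be well defined) nor the final equality $P=P_{T,\mathfrak m}$ in Step~4. In Step~4, ``simple and $P$ proper'' is unavailable: for $P=P_{T_0,a}$ the quotient $\KP_\K(E)$ has proper ideals strictly containing $0$, for instance the image of $P_{T_1,a}$.

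\textbf{Fix: use Cuntz--Krieger uniqueness for the aperiodic graph $\Gamma_T$ instead of simplicity.}
\begin{itemize}
\item In Step~4, the kernel of $\KP_\K(\Gamma_T)\cong A/P_{T,\mathfrak m}\to A/P$ contains no $ap_v$ with $a\in\K^\times$ and $v\in T$, by the definition of $H_P$. So the kernel is zero, which is exactly how the paper concludes.
\item For primitivity you need an actual faithful simple module. Choose an aperiodic $x\in z\Gamma_T^\infty$ and take its orbit module over $\kappa(\mathfrak m)$, as in Proposition~\ref{prop:orbit-model}. This module is simple because $x$ is aperiodic. Each $p_v$ with $v\in T$ acts nontrivially because $v$ reaches $z$, so Cuntz--Krieger uniqueness makes the module faithful.
\item The paper cites the sufficient direction of \cite[Theorem~3.7]{KashoulLarkiAminpour} and constructs precisely this module in Theorem~\ref{thm:tailwise-induced}.
\end{itemize}

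The rest of your proposal is sound: the quotient formula, the recovery of $(T,\mathfrak m)$, and the Dixmier step. Your direct derivation of (MT3) from $p_vAp_w\subseteq P$ is also correct, and it cleanly replaces the paper's appeal to the prime criterion of \cite{KashoulLarkiAminpour}.
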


\begin{proof}
Use Proposition~\ref{prop:base-change} and $\Theta_j$ to identify
$A$ with $B=\KP_C(\Gamma)$.  Since $\Gamma^0$ is finite,
$1_B=\sum_{v\in\Gamma^0}p_v$, and we identify $C$ with the central
subring $C1_B$.  This copy of $C$ is faithful because $cp_v\ne0$ for every
$0\ne c\in C$ and $v\in\Gamma^0$.

Fix $T\in\MT(\Gamma)$ and $\mathfrak m\in\MaxSpec C$.  The standard
quotient-graph and change-of-coefficients maps give
\begin{equation}\label{eq:explicit-tail-quotient}
 \frac{B}{I^C_{H_T}+\mathfrak mB}
 \cong\KP_{C/\mathfrak m}(\Gamma_T),
\end{equation}
which proves \eqref{eq:tail-quotient}.  The graph $\Gamma_T$ is aperiodic by
strong aperiodicity and its vertex set satisfies (MT3).  Since
$C/\mathfrak m$ is a field, the sufficient implication in
\cite[Theorem~3.7]{KashoulLarkiAminpour} shows that the algebra in
\eqref{eq:explicit-tail-quotient} is primitive.  An independent faithful
simple module for it is constructed in
Theorem~\ref{thm:tailwise-induced} below.  Hence $P_{T,\mathfrak m}$ is
primitive.

Moreover, $p_v\in P_{T,\mathfrak m}$ exactly when $v\notin T$, because every
vertex generator surviving in \eqref{eq:tail-quotient} is nonzero.  Thus the
ideal determines $T$.  Once $T$ is known, any $v\in T$ gives
\begin{equation}\label{eq:recover-m}
 \mathfrak m=\{c\in C:cp_v\in P_{T,\mathfrak m}\}.
\end{equation}
Indeed, the image of $cp_v$ in \eqref{eq:tail-quotient} is zero precisely
when $c\in\mathfrak m$.  This proves injectivity of $\Phi$ over every field.

Assume for the remainder of the proof that $\K$ is uncountable and
algebraically closed.  Let $P\in\Prim B$ and put
\begin{equation}\label{eq:HP-TP}
 H_P=\{v\in\Gamma^0:p_v\in P\},
 \qquad T_P=\Gamma^0\setminus H_P.
\end{equation}
The set $H_P$ is hereditary: if $v\in H_P$ and
$\lambda\in v\Gamma w$, then
$p_w=s_{\lambda^*}p_vs_\lambda\in P$.  It is saturated because, if
$s(v\Gamma^n)\subseteq H_P$ for some $0\ne n\in\N^\ell$, then (KP4) gives
\[
 p_v=\sum_{\lambda\in v\Gamma^n}
       s_\lambda p_{s(\lambda)}s_{\lambda^*}\in P.
\]
Since $P$ is proper, $T_P$ is nonempty.

Let $V$ be a faithful simple $B/P$-module.  Both $B$ and $B/P$ are
countable-dimensional over $\K$, so Lemma~\ref{lem:Dixmier} shows that the
central action of $C$ on $V$ is given by a unital $\K$-algebra homomorphism
$\chi_P:C\to\K$.  Define
\begin{equation}\label{eq:mP}
 \mathfrak m_P=\ker\chi_P=\{c\in C:c1_B\in P\}.
\end{equation}
This is a maximal ideal of $C$.
For every $v\in T_P$ we claim that
\begin{equation}\label{eq:coefficient-test}
 cp_v\in P\quad\Longleftrightarrow\quad c\in\mathfrak m_P.
\end{equation}
The reverse implication is immediate.  If $cp_v\in P$ and
$c\notin\mathfrak m_P$, then the nonzero scalar $\chi_P(c)$ times the image
of $p_v$ is zero in $B/P$, forcing $p_v\in P$, a contradiction.  This proves
\eqref{eq:coefficient-test}.

Let
\[
 Q=I^C_{H_P}+\mathfrak m_PB\subseteq P,
\]
where $I^C_{H_P}$ is the ideal of $\KP_C(\Gamma)$ generated by the vertices
in $H_P$.  The standard quotient-graph and change-of-coefficients maps give
\begin{equation}\label{eq:Q-quotient}
 B/Q\cong
 \KP_{C/\mathfrak m_P}(\Gamma\setminus H_P).
\end{equation}
Here $C/\mathfrak m_P\cong\K$ through $\chi_P$.
Because $\Gamma$ is strongly aperiodic, the graph on the right is
aperiodic.  By \eqref{eq:coefficient-test}, the kernel of the natural map
from the algebra in \eqref{eq:Q-quotient} to $B/P$ contains no element
$\bar c p_v$ with $0\ne\bar c\in C/\mathfrak m_P$ and
$v\in T_P$.  The Cuntz--Krieger uniqueness theorem
\cite[Theorem~4.7]{ArandaPinoClarkHuefRaeburn} makes this map injective.
Consequently $P=Q$ and
\begin{equation}\label{eq:primitive-quotient-P}
 B/P\cong
 \KP_{C/\mathfrak m_P}(\Gamma\setminus H_P).
\end{equation}

Since a primitive ring is prime, the algebra in
\eqref{eq:primitive-quotient-P} is prime.  The prime criterion
\cite[Theorem~3.3]{KashoulLarkiAminpour} implies that $T_P$ satisfies (MT3).
The saturated hereditary property of $H_P$ gives (MT1) and (MT2), so $T_P$
is a maximal tail.  We have proved
$P=P_{T_P,\mathfrak m_P}$, establishing surjectivity.
\end{proof}

\begin{remark}[Why the field hypothesis appears]\label{rem:field-hypothesis}
Uncountability and algebraic closedness are used only to prove exhaustivity,
through Lemma~\ref{lem:Dixmier}.  The necessity argument in
\cite[Theorem~3.7]{KashoulLarkiAminpour} passes from a presentation of a
faithful simple module as $R/J$, with $J$ a maximal left ideal, to the claim
that every element of $J$ annihilates $R/J$.  That implication does not hold
for a general left ideal.  We therefore use only the sufficient direction of
that theorem and make no arbitrary-field exhaustivity claim.  Removing the
field hypothesis in Theorem~\ref{thm:tail-classification} requires a separate
central-character or ring-theoretic argument.
\end{remark}

We next determine the topology, rather than only the set of primitive
ideals.  Give $\MT(\Gamma)$ the topology with basis
\begin{equation}\label{eq:Sv}
 \mathcal S(v)=\{T\in\MT(\Gamma):v\in T\},
 \qquad v\in\Gamma^0.
\end{equation}
It is indeed a basis: if a tail contains $v$ and $w$, condition (MT3)
provides a common descendant $z$ in that tail, and (MT1) gives
$\mathcal S(z)\subseteq\mathcal S(v)\cap\mathcal S(w)$.
This is the standard maximal-tail topology; compare
\cite[Theorem~3.15 and Lemma~3.16]{KangPask}.  Give $\MaxSpec C$ its Zariski
topology and the product the product topology.

\begin{theorem}[Product topology]\label{thm:tail-topology}
For every field $\K$, the map $\Phi$ in \eqref{eq:tail-bijection} is a
homeomorphism onto its image.  If $\K$ is uncountable and algebraically
closed, then $\Phi$ is a homeomorphism onto $\Prim A$, and for
$T\in\MT(\Gamma)$ and $\mathfrak m\in\MaxSpec C$,
\begin{equation}\label{eq:point-closure}
 \overline{\{P_{T,\mathfrak m}\}}
 =\{P_{S,\mathfrak m}:S\in\MT(\Gamma),\ S\subseteq T\}.
\end{equation}
\end{theorem}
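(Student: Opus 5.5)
The plan is to reduce everything to a containment criterion between the ideals $P_{T,\mathfrak m}$ and then compute hulls. The key claim is
\[
 P_{S,\mathfrak n}\supseteq P_{T,\mathfrak m}
 \quad\Longleftrightarrow\quad
 S\subseteq T\ \text{ and }\ \mathfrak n=\mathfrak m .
\]

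For ($\Leftarrow$), note that $S\subseteq T$ gives $H_T\subseteq H_S$, so $I_{H_T}\subseteq I_{H_S}$. With $\mathfrak n=\mathfrak m$ the coefficient parts agree, and the inclusion follows.

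For ($\Rightarrow$), I will use the two recovery facts from the proof of Theorem~\ref{thm:tail-classification}.
\begin{itemize}
\item First, $p_v\in P_{T,\mathfrak m}$ iff $v\notin T$. So $v\notin T$ forces $p_v\in P_{S,\mathfrak n}$, i.e.\ $v\notin S$. This gives $S\subseteq T$.
\item Second, \eqref{eq:recover-m} applies. $S$ is nonempty, so pick $v\in S\subseteq T$. For $c\in\mathfrak m$ we have $cp_v\in P_{T,\mathfrak m}\subseteq P_{S,\mathfrak n}$, hence $c\in\mathfrak n$. Maximality of $\mathfrak m$ then gives $\mathfrak m=\mathfrak n$.
\end{itemize}

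Next, I compute the pullback of a hull-kernel closed set under $\Phi$. A closed set in $\Prim A$ has the form $\mathrm{hull}(I)=\{P:P\supseteq I\}$, and I write $\Phi^{-1}(\mathrm{hull}(I))$ explicitly. Put $L=\{v:p_v\in I\}$ and $J=\{c\in C: c1\in I\}$; these are the analogues of $H_P$ and $\mathfrak m_P$.

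The goal is
\[
 P_{T,\mathfrak m}\supseteq I
 \quad\Longleftrightarrow\quad
 L\cap T=\varnothing\ \text{ and }\ J\subseteq\mathfrak m .
\]
The forward direction is immediate from the recovery facts: $c\in J$ gives $c p_v\in P_{T,\mathfrak m}$ for $v\in T$, so $c\in\mathfrak m$.

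The converse is the main obstacle, because $I$ need not be of the form $I_L+AJ$. I would handle it by passing to the primitive case. Any closed set equals the closure of its points, so it suffices to describe the closure of an arbitrary subset $Y\subseteq\Prim A$ with $\ker Y=\bigcap Y$. In the image of $\Phi$, write $Y=\{P_{T_i,\mathfrak m_i}\}$; the hull of $\bigcap_i P_{T_i,\mathfrak m_i}$ is what must be computed.

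For this I will use the identification $A\cong\KP_C(\Gamma)$ and its $C$-module structure. An element $x$ lies in $P_{T,\mathfrak m}$ iff its image in $\KP_{\kappa(\mathfrak m)}(\Gamma_T)$ vanishes. Expanding $x=\sum c_{\alpha,\beta}s_\alpha s_{\beta^*}$ in a basis of $B$ (a free $C$-module via the tensor decomposition), this means: for each basis term supported on $\Gamma_T$, the coefficient lies in $\mathfrak m$.

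If the quotient-graph map is compatible with a basis of $B$ that restricts to a basis of $\KP(\Gamma_T)$, the intersection ideal is then described coefficientwise. The condition for $P_{S,\mathfrak n}\supseteq\bigcap_i P_{T_i,\mathfrak m_i}$ becomes: $(S,\mathfrak n)$ is in the closure of $\{(T_i,\mathfrak m_i)\}$ in the product topology. Concretely, for every $v\in S$, $\mathfrak n$ contains $\bigcap\{\mathfrak m_i: v\in T_i\}$, and I need $\mathcal S(v)\times(\text{Zariski open})$ to be the basic open sets.

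The delicate point is justifying this normal-form compatibility. If it resists, I would instead argue through the quotient: $A/\ker Y$ embeds in $\prod_i\KP_{\kappa(\mathfrak m_i)}(\Gamma_{T_i})$. A product-basic open set $\mathcal S(v)\times D(c)$ avoiding all $(T_i,\mathfrak m_i)$ yields the element $cp_v\in\ker Y$. Such elements exclude exactly the points of $\mathcal S(v)\times D(c)$, and this gives closedness of the image in both directions.

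This yields a homeomorphism onto the image. When $\K$ is uncountable and algebraically closed, $\Phi$ is surjective by Theorem~\ref{thm:tail-classification}. Finally, \eqref{eq:point-closure} follows from the containment criterion applied to a single point.
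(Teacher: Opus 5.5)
Your containment criterion $P_{S,\mathfrak n}\supseteq P_{T,\mathfrak m}\Leftrightarrow S\subseteq T,\ \mathfrak n=\mathfrak m$ is correct, and together with surjectivity it gives \eqref{eq:point-closure}. Your fallback use of the elements $cp_v$ is exactly the paper's identity $\Phi(\mathcal S(v)\times D_C(c))=D_A(p_v\iota(c))\cap\operatorname{im}\Phi$, where $D_A(a)=\{P\in\Prim A:a\notin P\}$ and $D_C(c)=\{\mathfrak m:c\notin\mathfrak m\}$. It shows that a point outside the product closure of $\{(T_i,\mathfrak m_i)\}$ lies outside the hull of $\bigcap_iP_{T_i,\mathfrak m_i}$, i.e.\ that $\Phi$ is open onto its image.

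The gap is the other half, \emph{continuity} of $\Phi$. If $(S,\mathfrak n)$ lies in the product closure, why does every element of $\bigcap_iP_{T_i,\mathfrak m_i}$ lie in $P_{S,\mathfrak n}$? The elements $cp_v$ only control themselves. Your phrase ``in both directions'' would require $\bigcap_iP_{T_i,\mathfrak m_i}$ to be generated by elements of that form, which is an ideal-structure theorem over the coefficient ring $C$ that you neither prove nor cite. Your coefficientwise route is also left open. The family $\{s_\alpha s_{\beta^*}\}$ is only a spanning set, since (KP4) makes it dependent, and a basis adapted to every $I_{H_T}$ at once is never produced.

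Note too that your displayed ``goal'' is false. In Example~\ref{ex:two-tails}, take $I=\langle(U-a1)p_w\rangle$. Then $L=\varnothing$ because $I\subseteq P_{T_0,a}$, and $J=0$ because $I\subseteq\langle p_w\rangle$, which contains no nonzero $\iota(c)$. Yet $I\not\subseteq P_{T_0,b}$ for $b\ne a$, since $(U-a1)p_w\equiv(b-a)p_w\neq0$ modulo $P_{T_0,b}$.

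The missing step needs no simultaneous basis; one tail at a time suffices, and this is the paper's argument.
\begin{itemize}
\item Fix $x\in A$ and a tail $T$. Write the image of $x$ in $A/I_{H_T}\cong\KP_\K(\Gamma_T)\otimes_\K C$ as $\sum_ib_i\otimes c_i$ with the $b_i$ linearly independent over $\K$.
\item Since $A/P_{T,\mathfrak m}\cong\KP_\K(\Gamma_T)\otimes_\K\kappa(\mathfrak m)$, one has $x\notin P_{T,\mathfrak m}$ iff some $c_i\notin\mathfrak m$. Hence $F_T(x)=\{\mathfrak m:x\notin P_{T,\mathfrak m}\}=\bigcup_iD_C(c_i)$ is Zariski open.
\item Because $T\subseteq S$ implies $P_{S,\mathfrak m}\subseteq P_{T,\mathfrak m}$, the set $\{(T,\mathfrak m):x\notin P_{T,\mathfrak m}\}$ equals $\bigcup_T\mathord\uparrow T\times F_T(x)$.
\item This set is open, since $\mathord\uparrow T=\bigcap_{v\in T}\mathcal S(v)$ is a finite intersection ($\Gamma^0$ is finite).
\end{itemize}
Adding this continuity argument to your openness and containment arguments completes the proof.
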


\begin{proof}
For $a\in A$, let
$D_A(a)=\{P\in\Prim A:a\notin P\}$ and, for $c\in C$, let
$D_C(c)=\{\mathfrak m\in\MaxSpec C:c\notin\mathfrak m\}$.  Fix
$T\in\MT(\Gamma)$ and let
$q_T:A\to A/I_{H_T}$.  Through the decomposition theorem and the quotient
graph map, write
\begin{equation}\label{eq:qTa}
 q_T(a)=\sum_{i=1}^n b_i\otimes c_i
 \quad\text{in}\quad
 \KP_{\K}(\Gamma_T)\otimes_{\K}C,
\end{equation}
where the nonzero $b_i$ are $\K$-linearly independent.  It follows that
\begin{equation}\label{eq:FTa}
 F_T(a):=\{\mathfrak m:a\notin P_{T,\mathfrak m}\}
 =\bigcup_{i=1}^nD_C(c_i),
\end{equation}
with the empty union understood when $q_T(a)=0$.  Hence $F_T(a)$ is Zariski
open.

For a maximal tail $T$, set
\[
 \mathord\uparrow T=\{S\in\MT(\Gamma):T\subseteq S\}
 =\bigcap_{v\in T}\mathcal S(v).
\]
This set is open because $\Gamma^0$, and hence $T$, is finite.  If
$T\subseteq S$, then $H_S\subseteq H_T$, so
$P_{S,\mathfrak m}\subseteq P_{T,\mathfrak m}$.  Consequently,
\begin{equation}\label{eq:preimage-Da}
 \Phi^{-1}(D_A(a))
 =\bigcup_{T\in\MT(\Gamma)}(\mathord\uparrow T)\times F_T(a),
\end{equation}
which is open.  Thus $\Phi$ is continuous.

Conversely, the sets $\mathcal S(v)\times D_C(c)$ form a basis for the
product topology, and
\begin{equation}\label{eq:rectangle-open}
 \Phi\bigl(\mathcal S(v)\times D_C(c)\bigr)
 =D_A\bigl(p_v\iota(c)\bigr)\cap\operatorname{im}\Phi.
\end{equation}
Indeed, the image of $p_v\iota(c)$ in
$\KP_{C/\mathfrak m}(\Gamma_T)$ is nonzero exactly when $v\in T$ and
$c\notin\mathfrak m$.  Hence $\Phi$ is open onto its image and is a
topological embedding.  Under the additional field hypothesis,
Theorem~\ref{thm:tail-classification} makes it surjective.

Under this hypothesis, the closure of $\{T\}$ in $\MT(\Gamma)$ is
$\{S:S\subseteq T\}$ by \eqref{eq:Sv}, whereas points of $\MaxSpec C$ are
closed.  Taking the product gives \eqref{eq:point-closure}.
\end{proof}

\begin{corollary}[Torus fibres]\label{cor:tail-tori}
Suppose that $\K$ is uncountable and algebraically closed, choose a basis
$h_1,\ldots,h_r$ of $H$, and let
$a=(a_1,\ldots,a_r)\in(\K^\times)^r$.  Then
\begin{equation}\label{eq:PTa}
 P_{T,a}=
 \left\langle p_v\ (v\notin T),\
 U_{h_i}-a_i1\ (1\leq i\leq r)\right\rangle
\end{equation}
exhausts the primitive ideals of $A$.  Under these coordinates,
\begin{equation}\label{eq:tail-torus-homeo}
 \Prim A\cong\MT(\Gamma)\times(\K^\times)^r.
\end{equation}
\end{corollary}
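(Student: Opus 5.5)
The plan is to read the corollary off Theorems~\ref{thm:tail-classification} and \ref{thm:tail-topology} by making $\MaxSpec C$ explicit. I would first fix the basis $h_1,\ldots,h_r$ and use the Laurent identification \eqref{eq:Laurent}, $u_{h_i}\mapsto z_i$. Since $\K$ is algebraically closed, the weak Nullstellensatz describes the maximal ideals of $C$ as $\mathfrak m_a=(u_{h_1}-a_1,\ldots,u_{h_r}-a_r)$ with $a\in(\K^\times)^r$. The coordinates $a_i$ are nonzero because each $u_{h_i}$ is a unit, so no maximal ideal contains it. The assignment $a\mapsto\mathfrak m_a$ is a bijection, and it is a homeomorphism when $(\K^\times)^r$ carries its Zariski topology as the principal open subset $D(z_1\cdots z_r)$ of $\K^r$, i.e.\ as the affine variety $\MaxSpec C$. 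This is the same step already used in Corollary~\ref{cor:torus}.

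Next I would identify the generators. By \eqref{eq:PTm}, $P_{T,\mathfrak m_a}=I_{H_T}+A\iota(\mathfrak m_a)$. Since $\iota$ is a unital homomorphism with central image (Corollary~\ref{cor:central-units}), $A\iota(\mathfrak m_a)$ is the two-sided ideal generated by the central elements $\iota(u_{h_i}-a_i)=U_{h_i}-a_i1$. To see this, write a general element of $\mathfrak m_a$ as $\sum_i c_i(u_{h_i}-a_i)$ and apply $\iota$. Likewise $I_{H_T}$ is by definition generated by $p_v$ for $v\in H_T=\Gamma^0\setminus T$. The sum of these two ideals is therefore exactly the ideal displayed in \eqref{eq:PTa}, so $P_{T,a}=P_{T,\mathfrak m_a}$.

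Exhaustion then follows from the bijectivity of $\Phi$ in Theorem~\ref{thm:tail-classification} under the field hypothesis: every primitive ideal equals $P_{T,\mathfrak m}$ for a unique pair $(T,\mathfrak m)$, hence equals $P_{T,a}$ for a unique $(T,a)$. For \eqref{eq:tail-torus-homeo}, I would compose the homeomorphism $\Phi$ of Theorem~\ref{thm:tail-topology} with $\id\times(a\mapsto\mathfrak m_a)$. A product of homeomorphisms is a homeomorphism for the product topologies, so the composite is a homeomorphism $\MT(\Gamma)\times(\K^\times)^r\to\Prim A$.

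No step is a real obstacle. The only point needing care is the topology on the torus: it must be the Zariski topology induced from the Laurent ring, and the statement should say so, as Corollary~\ref{cor:torus} did.
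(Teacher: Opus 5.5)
Your proposal is correct and matches the paper's proof, which likewise combines Theorem~\ref{thm:tail-topology} (with the bijectivity from Theorem~\ref{thm:tail-classification}), the Laurent identification \eqref{eq:Laurent} and the weak Nullstellensatz, and reads \eqref{eq:PTa} off \eqref{eq:PTm} with $\mathfrak m=(u_{h_1}-a_1,\ldots,u_{h_r}-a_r)$. You also spell out two routine points the paper leaves implicit: that $A\iota(\mathfrak m_a)$ is generated by the central elements $U_{h_i}-a_i1$, and that $a\mapsto\mathfrak m_a$ is a Zariski homeomorphism, so the product map is a homeomorphism for the product topology.
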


\begin{proof}
Combine Theorem~\ref{thm:tail-topology}, the Laurent identification
\eqref{eq:Laurent}, and the weak Nullstellensatz.  Formula
\eqref{eq:PTa} is \eqref{eq:PTm} with
$\mathfrak m=(u_{h_1}-a_1,\ldots,u_{h_r}-a_r)$.
\end{proof}

\begin{corollary}[Tail periodicity]\label{cor:tail-periodicity}
For every $T\in\MT(\Gamma)$,
\begin{equation}\label{eq:tail-periodicity}
 \Per(f^*\Gamma_T)=H.
\end{equation}
Thus the second coordinate of the parameter family
\eqref{eq:tail-bijection} is the maximal-ideal space of the group algebra of
the periodicity created on each tail reduction by the pullback.
\end{corollary}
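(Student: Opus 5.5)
Here $\Per(\Lambda)$ denotes the periodicity group of a $k$-graph $\Lambda$: the set of $m-n\in\Z^k$ with $m,n\in\N^k$ such that $\sigma^m x=\sigma^n x$ for every infinite path $x$ of $\Lambda$ (equivalently, the set of differences $m-n$ with $\sigma^m y=\sigma^n y$ for all $y$, which is a subgroup for row-finite graphs without sources). The first step is to note that $f^*\Gamma_T=f^*(\Gamma\setminus H_T)=(f^*\Gamma)\setminus H_T$, so we may work with the pullback of the $\ell$-graph $\Gamma_T$. We then describe infinite paths of a pullback: an infinite path of $f^*\Gamma_T$ is determined by an infinite path $x$ of $\Gamma_T$, via $y(n,n')=(x(f(n),f(n')),n'-n)$, and the shift satisfies $\sigma^n y=\widetilde{\sigma^{f(n)}x}$, where the tilde denotes the canonical lift. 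This is the Kumjian--Pask description behind \cite[Proposition~2.10]{KumjianPask}, and I would verify it directly from the factorisation property and the definition \eqref{eq:pullback}.

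For the inclusion $H\subseteq\Per(f^*\Gamma_T)$, write $h\in H$ as $h=h_+-h_-$ with $f(h_+)=f(h_-)$. The description above gives $\sigma^{h_+}y=\widetilde{\sigma^{f(h_+)}x}=\sigma^{h_-}y$ for every infinite path $y$, so $h$ is a period. For the converse, take $m-n\in\Per(f^*\Gamma_T)$. Then $\sigma^{f(m)}x=\sigma^{f(n)}x$ for every $x\in\Gamma_T^\infty$, because lifting is injective. Since $\Gamma_T$ is aperiodic by strong aperiodicity, there is an aperiodic path $x$ (using any vertex of $T$), and this forces $f(m)=f(n)$, that is, $\bar f(m-n)=0$. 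Hence $m-n\in H$.

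The main obstacle is to fix the definition of $\Per$ being used, since the paper has not defined it. If it is the vertex-wise or groupoid-isotropy version (the isotropy of a generic path), the same argument applies: the isotropy of the lift of an aperiodic path is exactly $H$, which matches the statement in the Introduction. I would state the pointwise definition and remark that the identity holds in both formulations.
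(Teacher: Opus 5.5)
Your proposal is correct and follows the paper's route. The paper notes that strong aperiodicity makes $\Gamma_T$ aperiodic and applies Proposition~\ref{prop:isotropy} to $\Gamma_T$; the proof of that proposition consists of exactly your two inclusions, namely $h=h_+-h_-$ with the shift identity $\sigma^n\widetilde{x}=\widetilde{\sigma^{f(n)}x}$, and an aperiodic path forcing $f(m)=f(n)$. Your hedging over the definition is unnecessary: the paper defines $\Per(\Lambda)$ as the set of $m-n$ with $\sigma^m y=\sigma^n y$ for every $y\in\Lambda^\infty$, which is the version you use.
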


\begin{proof}
Strong aperiodicity makes $\Gamma_T$ aperiodic.  Apply
Proposition~\ref{prop:isotropy} to $\Gamma_T$.
\end{proof}

\section{Isotropy and induced simple modules}\label{sec:modules}

We first retain the cofinal and aperiodic hypotheses of
Section~\ref{sec:ideals} and realise the ideals in \eqref{eq:prim-homeo} as
annihilators.  We then return to the strongly aperiodic, not necessarily
cofinal, setting of Section~\ref{sec:tails}.  We use the standard
infinite-path groupoid
\[
 \G_\Gamma=\{(x,p-q,y):x,y\in\Gamma^\infty,
     \ \sigma^p x=\sigma^q y\}.
\]
For a unit $x$, write
\[
 (\G_\Gamma)_x=\{\gamma:s(\gamma)=r(\gamma)=x\},
 \qquad L_x=s^{-1}(x).
\]
If $V$ is a left $\K(\G_\Gamma)_x$-module, isotropy induction is
\begin{equation}\label{eq:induction}
 \Ind_x(V)=\K L_x\otimes_{\K(\G_\Gamma)_x}V.
\end{equation}
This is the convention used in \cite[Definition~3.2]{NguyenNguyen}.

Every $x\in\Gamma^\infty$ has a canonical lift
$\widetilde{x}\in(f^{*}\Gamma)^\infty$ given by
\begin{equation}\label{eq:path-lift}
 \widetilde{x}(p,q)=
 \bigl(x(f(p),f(q)),q-p\bigr),\qquad p\leq q\in\N^{k}.
\end{equation}
Surjectivity of $f$ makes $x\mapsto\widetilde{x}$ a homeomorphism; see
\cite[Propositions~2.9--2.10]{KumjianPask}.  In particular,
\begin{equation}\label{eq:shift-lift}
 \sigma^n\widetilde{x}=\widetilde{\sigma^{f(n)}x}.
\end{equation}

For an infinite path $y$ in a $k$-graph, put
\[
 \Per(y)=\{m-n\in\Z^{k}:\sigma^m y=\sigma^n y
                 \text{ for some }m,n\in\N^k\}.
\]
We call $y$ \emph{aperiodic} if $\Per(y)=\{0\}$.  We also write
\[
 \Per(\Lambda)=\{m-n:\sigma^m y=\sigma^n y
                     \text{ for every }y\in\Lambda^\infty\}.
\]

\begin{proposition}[Kernel isotropy]\label{prop:isotropy}
For $x\in\Gamma^\infty$, there is an isomorphism
\begin{equation}\label{eq:isotropy-product}
 (\G_{f^{*}\Gamma})_{\widetilde{x}}
 \cong (\G_\Gamma)_x\times H.
\end{equation}
If $x$ is aperiodic, the isotropy group on the left is therefore $H$.
Moreover, if $\Gamma$ is aperiodic, then
\begin{equation}\label{eq:global-periodicity}
 \Per(f^{*}\Gamma)=H.
\end{equation}
\end{proposition}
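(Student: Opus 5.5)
The plan is to compute the isotropy directly from the shift formula \eqref{eq:shift-lift} and the splitting \eqref{eq:split-group}. An element of $(\G_{f^*\Gamma})_{\widetilde x}$ has the form $(\widetilde x,g,\widetilde x)$ with $g=p-q$, $p,q\in\N^k$ and $\sigma^p\widetilde x=\sigma^q\widetilde x$. By \eqref{eq:shift-lift} this equation says $\widetilde{\sigma^{f(p)}x}=\widetilde{\sigma^{f(q)}x}$. Since the lift is injective, it is equivalent to $\sigma^{f(p)}x=\sigma^{f(q)}x$. Consequently $(x,\bar f(g),x)\in(\G_\Gamma)_x$, and we obtain a homomorphism $\pi:(\G_{f^*\Gamma})_{\widetilde x}\to(\G_\Gamma)_x$, $(\widetilde x,g,\widetilde x)\mapsto(x,\bar f(g),x)$.

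The isomorphism in \eqref{eq:isotropy-product} is $(\widetilde x,g,\widetilde x)\mapsto\bigl((x,\bar f(g),x),\,g-j(\bar f(g))\bigr)$, and it is visibly a homomorphism. It is injective because $g=j(\bar f(g))+(g-j(\bar f(g)))$ by \eqref{eq:split-group}.

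Surjectivity is the step needing care, since $j$ is not positive. We must show that every $g\in\Z^k$ with $\bar f(g)\in\Per(x)$ (in the strong sense $\sigma^a x=\sigma^b x$, $a-b=\bar f(g)$) is realised by some $p,q\in\N^k$ with $p-q=g$ and $\sigma^{f(p)}x=\sigma^{f(q)}x$. Choose $a,b\in\N^\ell$ with $\sigma^a x=\sigma^b x$ and $a-b=\bar f(g)$. Then $\sigma^{a+c}x=\sigma^{b+c}x$ for every $c\in\N^\ell$, so enlarging both shifts by a common amount is harmless. Write $g=g_+-g_-$. Using surjectivity of $f$, pick $n\in\N^k$ with $f(n)=a$. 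Set $q=n-g$ and enlarge $n$ by some $t\in\N^k$ so that $q=n+t-g\ge0$; for instance, take $t=g_-$.

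With $p=n+t$ and $q=n+t-g$, both in $\N^k$, we have $f(p)=a+f(t)$ and $f(q)=a+f(t)-\bar f(g)=b+f(t)$. Hence $\sigma^{f(p)}x=\sigma^{f(q)}x$ follows from $\sigma^a x=\sigma^b x$ by shifting further by $f(t)$. This proves surjectivity, because each pair (isotropy element, $h\in H$) corresponds to $g=g_0+h$ for any lift $g_0$ of the given isotropy degree. If $x$ is aperiodic, then $(\G_\Gamma)_x$ is trivial and the left side is $H$.

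For \eqref{eq:global-periodicity}, the inclusion $H\subseteq\Per(f^*\Gamma)$ uses the same construction with $a=b$. For $h\in H$, the elements $p=h_+$ and $q=h_-$ satisfy $f(p)=f(q)$. Every infinite path of $f^*\Gamma$ is a lift $\widetilde y$, because the lift is a bijection. Therefore $\sigma^{h_+}\widetilde y=\sigma^{h_-}\widetilde y$ for all $y$.

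For the reverse inclusion, suppose $m-n\in\Per(f^*\Gamma)$. By \eqref{eq:shift-lift} we get $\sigma^{f(m)}y=\sigma^{f(n)}y$ for every $y\in\Gamma^\infty$. Aperiodicity supplies some $y$ (an infinite path at any vertex, which exists by the no-sources condition) with $\sigma^{f(m)}y\ne\sigma^{f(n)}y$ unless $f(m)=f(n)$. Hence $m-n\in H$.

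The main obstacle is the bookkeeping in the surjectivity step, where the nonpositive section $j$ forces the choice of common shifts $t$. The only other point to check is that $\Per$ for $\Lambda$ is meant with a fixed pair $(m,n)$ working for all paths, which is how it is defined above.
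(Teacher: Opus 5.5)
Your argument is correct apart from one arithmetic slip in the surjectivity step. With $t=g_-$ you get $q=n+t-g=n+2g_- -g_+$, which need not lie in $\N^k$. For instance, with $f$ the coordinate sum $\N^2\to\N$, $g=(2,-1)$ and $n=(0,1)$, you get $q=(-2,3)$. Take $t=g_+$ instead. Then $p=n+g_+$ and $q=n+g_-$ both lie in $\N^k$, and your computation $f(p)=a+f(t)$, $f(q)=b+f(t)$ goes through unchanged. A smaller point: to hit a prescribed pair $((x,z,x),h)$ exactly, use $g=j(z)+h$. An arbitrary lift $g_0$ of $z$ plus $h$ lands on $h+g_0-j(z)$, which still gives surjectivity but not the correspondence you state.

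Your map $(\widetilde x,g,\widetilde x)\mapsto\bigl((x,\bar f(g),x),g-j(\bar f(g))\bigr)$ is exactly the inverse of the restriction of the paper's map $((x,z,y),h)\mapsto(\widetilde x,j(z)+h,\widetilde y)$ in \eqref{eq:groupoid-product}. Your treatment of $\Per(f^*\Gamma)$ is the paper's argument; you also make explicit that every infinite path of $f^*\Gamma$ is a lift, which the paper uses tacitly.

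The difference is in how \eqref{eq:isotropy-product} is obtained. The paper cites the global groupoid isomorphism of \cite[Proposition~2.10]{KumjianPask} and restricts it to isotropy. You instead verify the isotropy statement directly from \eqref{eq:shift-lift}, injectivity of the lift, and the common-shift trick.

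Your route is self-contained. It also shows explicitly why $(\widetilde x,j(z)+h,\widetilde x)$ lies in $\G_{f^*\Gamma}$ even though $j$ need not be positive, a point the citation hides. The paper's route gives the full groupoid isomorphism, which it reuses for source fibres and bisections in Proposition~\ref{prop:orbit-model}. Your argument, run with $\sigma^a x=\sigma^b y$, would recover the algebraic part of that isomorphism, but not its topology without further work.
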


\begin{proof}
The groupoid isomorphism of
\cite[Proposition~2.10]{KumjianPask} can be written, using the section $j$, as
\begin{equation}\label{eq:groupoid-product}
 \G_\Gamma\times H\longrightarrow\G_{f^{*}\Gamma},
 \quad ((x,z,y),h)\longmapsto
 (\widetilde{x},j(z)+h,\widetilde{y}).
\end{equation}
Restriction to isotropy gives \eqref{eq:isotropy-product}.  If $x$ is
aperiodic, $(\G_\Gamma)_x$ is trivial.

For the final statement, first let $h\in H$ and write $h=h_+-h_-$.  Since
$f(h_+)=f(h_-)$, equation \eqref{eq:shift-lift} gives
$\sigma^{h_+}\widetilde{x}=\sigma^{h_-}\widetilde{x}$ for every $x$.
Thus $H\subseteq\Per(f^{*}\Gamma)$.  Conversely, if $m-n$ belongs to the
right-hand periodicity group, then
$\sigma^{f(m)}x=\sigma^{f(n)}x$ for every $x\in\Gamma^\infty$.  Aperiodicity
of $\Gamma$ forces $f(m)=f(n)$, and hence $m-n\in H$.
\end{proof}

Let $\mathfrak m\in\MaxSpec C$, let
\begin{equation}\label{eq:residue-field}
 \kappa(\mathfrak m)=C/\mathfrak m,
\end{equation}
and define
\begin{equation}\label{eq:rho-m}
 \rho_{\mathfrak m}:H\longrightarrow\kappa(\mathfrak m)^{\times},
 \qquad h\longmapsto u_h+\mathfrak m.
\end{equation}
The residue field is a simple $C$-module.  Via
\eqref{eq:isotropy-product}, it is therefore a simple module over the
isotropy group algebra at $\widetilde{x}$ whenever $x$ is aperiodic.
The standing aperiodicity condition ensures that such an infinite path
exists at every vertex.

\begin{theorem}[Induced realisation of primitive ideals]
\label{thm:induced-realisation}
Let $x\in\Gamma^\infty$ be aperiodic.  For
$\mathfrak m\in\MaxSpec C$, set
\begin{equation}\label{eq:Mxm}
 M(x,\mathfrak m)=
 \Ind_{\widetilde{x}}\bigl(\kappa(\mathfrak m)\bigr).
\end{equation}
Then $M(x,\mathfrak m)$ is a simple $A$-module and
\begin{equation}\label{eq:ann-Mxm}
 \Ann_A M(x,\mathfrak m)=I_{\mathfrak m}.
\end{equation}
Consequently, if $\K$ is uncountable and algebraically closed, every
primitive ideal of $A$ is the annihilator of a module of the form
\eqref{eq:Mxm}.
\end{theorem}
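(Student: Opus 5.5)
The plan is to make the induced module concrete and then read off its simplicity and annihilator from the ideal theory of Section~\ref{sec:ideals}.

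\textbf{Step 1: a concrete model.} Write $y=\widetilde{x}$. By Proposition~\ref{prop:isotropy}, $(\G_{f^*\Gamma})_y\cong H$ via $h\mapsto(y,h,y)$, with $H$ acting on $\kappa(\mathfrak m)$ through $\rho_{\mathfrak m}$. The fibre $L_y$ is a free right $H$-set whose orbits are indexed by the shift-tail class $[y]$ of $y$. For each $z\in[y]$ I fix a representative arrow $\gamma_z=(z,g_z,y)$. Then
\[
 M(x,\mathfrak m)\cong\bigoplus_{z\in[y]}\kappa(\mathfrak m)\,e_z .
\]
The generators act by the usual formulas: $s_{(\mu,n)}$ sends $e_z$ to a $\rho_{\mathfrak m}$-twisted multiple of $e_{\mu z}$ when $r(z)=s(\mu)$, and the ghost generators act dually. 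Under this model the central unit $U_h$ acts by the scalar $\rho_{\mathfrak m}(h)=u_h+\mathfrak m$ on each $e_z$. I would check this from the definition \eqref{eq:Uvh}: each summand $s_{(\lambda,h_+)}s_{(\lambda,h_-)^*}$ moves $e_z$ along the arrow $(z,h,z)$. Hence $\iota(\mathfrak m)$ annihilates $M$, which gives $I_{\mathfrak m}\subseteq\Ann_A M$.

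\textbf{Step 2: simplicity.} Since $x$ is aperiodic, distinct $z\in[y]$ give distinct vertices of the module basis, and the isotropy at $y$ is exactly $H$. Take a nonzero $\xi=\sum_i c_ie_{z_i}$. I would choose $N$ large and apply $s_{(\mu,N')}s_{(\mu,N')^*}$, with $\mu$ an initial segment of $z_1$. Because the finitely many $z_i$ are distinct infinite paths, for large $N$ this projection isolates $c_1e_{z_1}$. Applying a real or ghost generator then moves $e_{z_1}$ to any $e_z$, and scalars in $\kappa(\mathfrak m)$ are reached through $\iota(C)$, which acts via $C\to\kappa(\mathfrak m)$. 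So every nonzero $\xi$ generates $M$, and $M$ is simple. This is the analogue of the simplicity result in \cite{NguyenNguyen}. The main obstacle is making the isolation step precise: one must compare the pullback degrees $N'$ with the $\Gamma$-degrees via $f$, and the twisting cocycle $g_z$ enters the coefficients. I would control the coefficients by noting they are all invertible elements $\rho_{\mathfrak m}(h)$.

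\textbf{Step 3: the annihilator.} $\Ann_AM$ is a proper ideal containing $I_{\mathfrak m}$. By Theorem~\ref{thm:ideal-lattice}, every ideal has the form $I_J$, and $J\supseteq\mathfrak m$. Since $J$ is proper and $\mathfrak m$ is maximal, $J=\mathfrak m$, which proves \eqref{eq:ann-Mxm}. Properness holds because $p_{r(y)}e_y=e_y\neq0$.

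\textbf{Step 4: the final assertion.} Under the field hypothesis, Theorem~\ref{thm:spectra} says every primitive ideal equals $I_{\mathfrak m}$ for some $\mathfrak m$. Aperiodicity supplies an aperiodic $x$, so every primitive ideal is realised as $\Ann_A M(x,\mathfrak m)$.
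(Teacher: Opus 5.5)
Your proposal is correct. Steps 1, 3 and 4 agree with the paper in substance; the genuine difference is Step 2.

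\textbf{Simplicity.} The paper obtains simplicity in one line by quoting the general induction theorem for Steinberg algebras: induction of a simple isotropy module is simple. You instead prove it by hand in the orbit model:
\begin{enumerate}[label=(\roman*)]
\item cylinder projections $s_{(\mu,N')}s_{(\mu,N')^*}$ isolate one basis vector;
\item partial isometries transport $e_{z_1}$ to any $e_z$ in the orbit;
\item the central copy $\iota(C)$ supplies every scalar of $\kappa(\mathfrak m)$.
\end{enumerate}
This works. The obstacle you flag in the isolation step is harmless. A range projection acts on $e_z$ with the cocycle factors $\rho_{\mathfrak m}(j(d(\mu))-n)$ and $\rho_{\mathfrak m}(n-j(d(\mu)))$ cancelling, so no twisting enters there. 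Also, every infinite path of $f^*\Gamma$ is a lift. Hence separating finitely many distinct $z_i$ by initial segments reduces to the same statement in $\Gamma$, using any $N'$ with $f(N')$ large. The twists appear only in the transport step, as units of $\kappa(\mathfrak m)$, and $\iota(C)$ absorbs them.

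\textbf{What each route buys.} Your argument is self-contained. It uses only the formulas of Proposition~\ref{prop:orbit-model}, which your Step 1 essentially re-derives, including the explicit check that $U_h$ acts by $\rho_{\mathfrak m}(h)$. It also shows exactly why simplicity holds: the isotropy module is one-dimensional over $\kappa(\mathfrak m)$, and its scalars are realised by central units. The paper's citation is shorter and applies to arbitrary simple isotropy modules, but it imports the Steinberg-algebra induction machinery.

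\textbf{Annihilator.} The paper uses simplicity of $A/I_{\mathfrak m}\cong\KP_{\kappa(\mathfrak m)}(\Gamma)$ from \eqref{eq:quotient}. You use the lattice isomorphism of Theorem~\ref{thm:ideal-lattice} together with maximality of $\mathfrak m$. These are two forms of the same coefficient-ring result and are interchangeable here.
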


\begin{proof}
Induction from a simple isotropy module is simple; this is the induction
theorem recalled in \cite[Section~3]{NguyenNguyen}, originating in
\cite[Theorem~7.26]{SteinbergGroupoid}.  Thus $M(x,\mathfrak m)$ is simple.

The central copy of $C$ acts on $M(x,\mathfrak m)$ through the quotient
$C\to\kappa(\mathfrak m)$, so $I_{\mathfrak m}$ annihilates the module.
By \eqref{eq:quotient},
\[
 A/I_{\mathfrak m}\cong
 \KP_{\kappa(\mathfrak m)}(\Gamma).
\]
The algebra on the right is simple because its coefficient ring is a field
and $\Gamma$ is cofinal and aperiodic.  The annihilator of the nonzero module
$M(x,\mathfrak m)$, modulo $I_{\mathfrak m}$, is therefore a proper ideal of
a simple algebra and must be zero.  This proves \eqref{eq:ann-Mxm}.  The last
statement follows from Theorem~\ref{thm:spectra}.
\end{proof}

We finish the section with an explicit form of the induced modules.  Let
$\cO_x$ be the orbit of $x$ in $\G_\Gamma$ and let
$\kappa(\mathfrak m)\cO_x$ be the free
$\kappa(\mathfrak m)$-module with basis $\{e_y:y\in\cO_x\}$.

\begin{proposition}[Orbit model]\label{prop:orbit-model}
There is an $A$-module isomorphism
\begin{equation}\label{eq:orbit-isomorphism}
 M(x,\mathfrak m)\cong\kappa(\mathfrak m)\cO_x
\end{equation}
under which the pullback generators act as follows:
\begin{align}
 p_v e_y&=
 \begin{cases}e_y,&y(0)=v,\\0,&y(0)\ne v,
 \end{cases}\label{eq:orbit-vertex}\\
 s_{(\mu,n)}e_y&=
 \begin{cases}
 \rho_{\mathfrak m}(n-j(d(\mu)))e_{\mu y},
       &s(\mu)=y(0),\\
 0,&s(\mu)\ne y(0),
 \end{cases}\label{eq:orbit-real}\\
 s_{(\mu,n)^{*}}e_y&=
 \begin{cases}
 \rho_{\mathfrak m}(j(d(\mu))-n)
 e_{\sigma^{d(\mu)}y},&y(0,d(\mu))=\mu,\\
 0,&\text{otherwise.}
 \end{cases}\label{eq:orbit-ghost}
\end{align}
\end{proposition}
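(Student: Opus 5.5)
We will compute the induced module directly from the definition \eqref{eq:induction}, using the groupoid product \eqref{eq:groupoid-product} to describe both $L_{\widetilde x}$ and the right action of the isotropy group $(\G_{f^*\Gamma})_{\widetilde x}\cong H$ on it. Since $x$ is aperiodic, $(\G_\Gamma)_x$ is trivial, so every element of $L_{\widetilde x}$ can be written uniquely as $(\widetilde y,j(z)+h,\widetilde x)$ with $y\in\cO_x$, $(y,z,x)$ the unique element of $\G_\Gamma$ from $x$ to $y$, and $h\in H$. Right multiplication by the isotropy element $(\widetilde x,h',\widetilde x)$ shifts $h$ to $h+h'$. Thus $\K L_{\widetilde x}$ is a free right $\K[H]$-module with basis $\{\gamma_y:=(\widetilde y,j(z_y),\widetilde x):y\in\cO_x\}$. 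Consequently
\[
 \K L_{\widetilde x}\otimes_{C}\kappa(\mathfrak m)\cong\bigoplus_{y\in\cO_x}\kappa(\mathfrak m)e_y,\qquad \gamma_y u_h\otimes 1\longmapsto\rho_{\mathfrak m}(h)e_y .
\]
This gives the $\K$-linear (indeed $\kappa(\mathfrak m)$-linear) isomorphism \eqref{eq:orbit-isomorphism}.

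Next we transport the $A$-action. Via the Steinberg model \cite[Proposition~5.4]{ClarkPangalela}, the generator $s_{(\mu,n)}$ is the characteristic function of the compact open bisection $Z((\mu,n)*_{s}s(\mu))=\{(\widetilde{(\mu w)},n,\widetilde w)\}$ in $\G_{f^*\Gamma}$. The generator $s_{(\mu,n)^*}$ is the characteristic function of the inverse bisection, and $p_v$ is the characteristic function of $Z(v)$. The left action on $\K L_{\widetilde x}$ is convolution, so $1_U\cdot\gamma$ equals $\alpha\gamma$ if some $\alpha\in U$ has $s(\alpha)=r(\gamma)$, and equals $0$ otherwise. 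For $p_v$ this immediately yields \eqref{eq:orbit-vertex}. For $s_{(\mu,n)}$ with $s(\mu)=y(0)$ we get
\[
 (\widetilde{\mu y},n,\widetilde y)\gamma_y=(\widetilde{\mu y},\,n+j(z_y),\,\widetilde x).
\]
The unique $\G_\Gamma$-arrow from $x$ to $\mu y$ has degree $z_{\mu y}=d(\mu)+z_y$. Writing $n+j(z_y)=j(z_{\mu y})+(n-j(d(\mu)))$ shows that this arrow equals $\gamma_{\mu y}u_{n-j(d(\mu))}$, which gives \eqref{eq:orbit-real}. The ghost formula \eqref{eq:orbit-ghost} follows in the same way, using $z_{\sigma^{d(\mu)}y}=z_y-d(\mu)$ and the condition $y(0,d(\mu))=\mu$ for composability. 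Here the identification \eqref{eq:shift-lift} is needed to match lifted paths $\widetilde{\mu y}=(\mu,n)\widetilde y$, and $\sigma^{n}\widetilde y=\widetilde{\sigma^{d(\mu)}y}$ since $f(n)=d(\mu)$.

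Since $A$ is generated by these elements, checking the action on generators determines the module structure. Alternatively, one can verify directly that the displayed formulas define a Kumjian--Pask family of operators on $\kappa(\mathfrak m)\cO_x$ (KP1--KP4). For KP4 one uses that each $y$ has a unique prefix of degree $f(n)$. The linear bijection then intertwines the two actions on generators.

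The main obstacle is bookkeeping of the $H$-component. We must make sure the chosen transversal $\gamma_y$ is well defined: this needs the degree $z_y\in\Z^\ell$ to be unique, which is where aperiodicity of $x$ is used. We must also confirm that the Kumjian--Pask isomorphism of \eqref{eq:groupoid-product} with the section $j$ matches the bisection description of $s_{(\mu,n)}$. That is, the pullback degree $n$ must decompose as $j(f(n))+(n-j(f(n)))$ rather than with some other sign convention. I would verify this first on $p_v$ and on $U_{v,h}$, checking that $U_h$ acts by the scalar $\rho_{\mathfrak m}(h)$, as it must by centrality.
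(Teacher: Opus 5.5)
Your proposal is correct and follows essentially the same route as the paper. You identify $L_{\widetilde x}$ with $L_x\times H$ through the Kumjian--Pask product isomorphism, and you use the transversal $\{\gamma_y\}$ (the paper's $L_x\times\{0\}$) to get $\Ind_{\widetilde x}(\kappa(\mathfrak m))\cong\kappa(\mathfrak m)\cO_x$. You then read off the action from the bisection of $(\mu,n)$, where your identity $n+j(z_y)=j(z_{\mu y})+(n-j(d(\mu)))$ is exactly the paper's remark that this bisection is the base bisection of $\mu$ twisted by $n-j(d(\mu))$; you simply make that degree bookkeeping explicit.
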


\begin{proof}
Under \eqref{eq:groupoid-product}, the source fibre at
$\widetilde{x}$ is $L_x\times H$.  Since $x$ is aperiodic, the range map
$L_x\to\cO_x$ is a bijection.  Define
\[
 \Phi:\K(L_x\times H)\otimes_C\kappa(\mathfrak m)
 \longrightarrow\kappa(\mathfrak m)\cO_x
\]
on elementary tensors by
\begin{equation}\label{eq:Phi-orbit}
 \Phi((\gamma,h)\otimes c)
 =\rho_{\mathfrak m}(h)c\,e_{r(\gamma)}.
\end{equation}
The balancing relation is respected because
$(\gamma,h)u_t=(\gamma,h+t)$ and
$u_t c=\rho_{\mathfrak m}(t)c$.  A transversal $L_x\times\{0\}$ shows that
$\Phi$ is bijective.  Finally, the pullback bisection corresponding to
$(\mu,n)$ becomes, under \eqref{eq:groupoid-product}, the base bisection for
$\mu$ together with the group element $n-j(d(\mu))$.  Applying
\eqref{eq:Phi-orbit} gives \eqref{eq:orbit-real}.  The inverse bisection gives
\eqref{eq:orbit-ghost}, and the vertex formula is immediate.
\end{proof}

\subsection{Tailwise induced realisation}

We now return to the hypotheses of Section~\ref{sec:tails}: the vertex set
is finite and $\Gamma$ is strongly aperiodic, but cofinality is not assumed.

\begin{lemma}[A cofinal path in a maximal tail]\label{lem:tail-path}
For every $T\in\MT(\Gamma)$ there are $w_T\in T$ and an aperiodic path
$x_T\in w_T\Gamma_T^\infty$ such that
\begin{equation}\label{eq:tail-cofinal-path}
 v\Gamma_Tw_T\ne\varnothing
 \quad\text{for every }v\in T.
\end{equation}
In particular, for every $v\in T$ there is $n\in\N^\ell$ such that
$v\Gamma_Tx_T(n)\ne\varnothing$.
\end{lemma}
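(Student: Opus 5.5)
The plan is to build $w_T$ as a common descendant of all vertices of $T$, using finiteness of $\Gamma^0$ and (MT3). Then $x_T$ is obtained from the aperiodicity of the tail graph $\Gamma_T$, which strong aperiodicity supplies.

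First enumerate $T=\{v_1,\ldots,v_t\}$; this is possible because $\Gamma^0$ is finite and $T$ is nonempty. Define $z_1=v_1$. Given $z_i\in T$, apply (MT3) to $z_i$ and $v_{i+1}$ to obtain $z_{i+1}\in T$ with $z_i\Gamma z_{i+1}\ne\varnothing$ and $v_{i+1}\Gamma z_{i+1}\ne\varnothing$. Composing paths shows that $v_i\Gamma z_j\ne\varnothing$ for all $i\le j$. Put $w_T=z_t$. Then $v\Gamma w_T\ne\varnothing$ for every $v\in T$.

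Next I pass from $\Gamma$ to $\Gamma_T$. By \eqref{eq:quotient-graph}, $\Gamma_T=\Gamma\setminus H_T$ consists exactly of the paths whose source lies in $T$. Every path from $v$ to $w_T\in T$ therefore already lies in $\Gamma_T$, so $v\Gamma_Tw_T=v\Gamma w_T\ne\varnothing$, which is \eqref{eq:tail-cofinal-path}.

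For the path $x_T$, I check three facts about $H_T$ and $\Gamma_T$:
\begin{itemize}
\item By (MT1) and (MT2), $H_T$ is saturated and hereditary, as noted after the definition of maximal tails.
\item $H_T$ is proper, since $T\ne\varnothing$.
\item $\Gamma_T$ is a row-finite $\ell$-graph with vertex set $T$. By (MT2) it has no sources: each $v\in T$ has, in every degree $n$, some $\lambda\in v\Gamma^n$ with $s(\lambda)\in T$.
\end{itemize}
Strong aperiodicity then makes $\Gamma_T$ aperiodic. The infinite-path form of aperiodicity, applied at the vertex $w_T$ of $\Gamma_T$, gives $x_T\in w_T\Gamma_T^\infty$ with $\sigma^mx_T\ne\sigma^nx_T$ for $m\ne n$. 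Since shifts in $\Gamma_T$ are computed exactly as in $\Gamma$, $x_T$ is also aperiodic as a path of $\Gamma$.

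The final assertion follows by taking $n=0$, since $x_T(0)=w_T$.

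The main point needing care is that strong aperiodicity is applied to $\Gamma_T$ in the right form. The aperiodicity condition was stated in infinite-path form for row-finite graphs without sources, and that is exactly why (MT2) is needed to rule out sources in $\Gamma_T$. Everything else is routine.
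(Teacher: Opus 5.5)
Your proof is correct and follows the paper's argument essentially step for step. You apply (MT3) repeatedly over the finite set $T$ to obtain a common descendant $w_T$, note that $\Gamma_T$ has no sources because $H_T$ is saturated and hereditary (via (MT2)), invoke strong aperiodicity to make $\Gamma_T$ aperiodic, and use the infinite-path form of aperiodicity at $w_T$. Your explicit checks that paths with source $w_T$ already lie in $\Gamma_T$, and that aperiodicity of $x_T$ in $\Gamma_T$ agrees with aperiodicity in $\Gamma$, fill in details the paper leaves implicit.
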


\begin{proof}
Enumerate the finite set $T$ as $v_1,\ldots,v_s$.  Repeated use of (MT3)
produces a common descendant $w_T\in T$: first choose a common descendant of
$v_1,v_2$, then a common descendant of that vertex and $v_3$, and so on.
This proves \eqref{eq:tail-cofinal-path}.  Since $H_T$ is saturated and
hereditary, $\Gamma_T$ has no sources.  Strong aperiodicity makes
$\Gamma_T$ aperiodic, so the infinite-path formulation of aperiodicity gives
an aperiodic $x_T\in w_T\Gamma_T^\infty$.
\end{proof}

\begin{theorem}[Tailwise induced modules]\label{thm:tailwise-induced}
Let $T\in\MT(\Gamma)$, choose $x_T$ as in
Lemma~\ref{lem:tail-path}, and let $\mathfrak m\in\MaxSpec C$.  Then
\begin{equation}\label{eq:MTm}
 M(T,\mathfrak m):=
 \Ind_{\widetilde{x_T}}\bigl(\kappa(\mathfrak m)\bigr)
\end{equation}
is a simple $A$-module and
\begin{equation}\label{eq:ann-MTm}
 \Ann_A M(T,\mathfrak m)=P_{T,\mathfrak m}.
\end{equation}
Consequently, every ideal in the family \eqref{eq:tail-bijection} is realised
by isotropy induction; if $\K$ is uncountable and algebraically closed, this
accounts for every primitive ideal of $A$.
\end{theorem}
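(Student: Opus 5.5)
The plan is to transfer the argument of Theorem~\ref{thm:induced-realisation} to the tail quotient, with the maximal tail $T$ playing the role of cofinality. First, simplicity. The path $x_T$ of Lemma~\ref{lem:tail-path} lies in $\Gamma_T^\infty\subseteq\Gamma^\infty$. Because the factorisation property holds in $\Gamma$, any period of $x_T$ viewed in $\Gamma$ is also a period in $\Gamma_T$, so $x_T$ is aperiodic as a path of $\Gamma$. Proposition~\ref{prop:isotropy} then identifies the isotropy at $\widetilde{x_T}$ with $H$. The residue field $\kappa(\mathfrak m)$ is a simple $\K[H]$-module, so the induction theorem \cite[Theorem~7.26]{SteinbergGroupoid} makes $M(T,\mathfrak m)$ simple. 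The proof of Proposition~\ref{prop:orbit-model} uses only aperiodicity of $x$, not cofinality. We may therefore use the orbit model $M(T,\mathfrak m)\cong\kappa(\mathfrak m)\cO_{x_T}$ with the action given by \eqref{eq:orbit-vertex}--\eqref{eq:orbit-ghost}.

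Next, the inclusion $P_{T,\mathfrak m}\subseteq\Ann_A M(T,\mathfrak m)$. We check the two kinds of generators of \eqref{eq:PTm} separately.
\begin{enumerate}[label=(\roman*)]
\item Central part. Under $\Theta_j$, the element $U_h$ corresponds to $1\otimes u_h$, so in the orbit model it acts by $\rho_{\mathfrak m}(h)$ (sum \eqref{eq:orbit-real}--\eqref{eq:orbit-ghost} over $v\Gamma^{q_h}$ and use (KP4)). Hence $\iota(\mathfrak m)$ acts by zero.
\item Vertex part. Every $y\in\cO_{x_T}$ has the form $\mu\sigma^{p}x_T$. Since $x_T$ stays in $T$ and $T$ satisfies (MT1), $y(0)=r(\mu)\in T$. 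So by \eqref{eq:orbit-vertex}, $p_v$ kills every basis vector when $v\notin T$.
\end{enumerate}
Thus the module factors through $A/P_{T,\mathfrak m}\cong\KP_{\kappa(\mathfrak m)}(\Gamma_T)$ by Theorem~\ref{thm:tail-classification}.

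The main step is showing that the induced annihilator $J$ in $\KP_{\kappa(\mathfrak m)}(\Gamma_T)$ is zero. Here $\Gamma_T$ need not be cofinal, so simplicity is unavailable and we use Cuntz--Krieger uniqueness instead. The graph $\Gamma_T$ is aperiodic by strong aperiodicity and has no sources. By \cite[Theorem~4.7]{ArandaPinoClarkHuefRaeburn} it suffices to show that $\bar c\,p_v$ acts nonzero for every $v\in T$ and every $0\ne\bar c\in\kappa(\mathfrak m)$. One technical point needs checking: the module is a $\kappa(\mathfrak m)$-vector space and the action of $\kappa(\mathfrak m)$ coming from $C/\mathfrak m$ agrees with scalar multiplication on $\kappa(\mathfrak m)\cO_{x_T}$. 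Granting that, it is enough to find, for each $v\in T$, a basis vector $e_y$ with $y(0)=v$.

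This is exactly where Lemma~\ref{lem:tail-path} enters. For $v\in T$ choose $\mu\in v\Gamma_T w_T$ and put $y=\mu x_T\in\cO_{x_T}$. Then $p_v e_y=e_y\ne0$, so $\bar c\,p_v e_y=\bar c\,e_y\ne0$. Hence $J$ contains no element $\bar c p_v$, the uniqueness theorem forces $J=0$, and $\Ann_A M(T,\mathfrak m)=P_{T,\mathfrak m}$. The final assertion then follows from the surjectivity part of Theorem~\ref{thm:tail-classification}.

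The expected obstacle is this last step. One must make sure the uniqueness theorem is applied over the field $\kappa(\mathfrak m)$ rather than over $\K$, and that the module structure is genuinely $\kappa(\mathfrak m)$-linear. When $\kappa(\mathfrak m)\ne\K$ this rests on the central action computed in (i).
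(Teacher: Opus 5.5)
Your proposal is correct and follows essentially the same route as the paper. Simplicity comes from Proposition~\ref{prop:isotropy} together with the induction theorem. The orbit model shows that $I_{H_T}+A\iota(\mathfrak m)$ acts by zero, since $y(0)\in T$ for every $y\in\cO_{x_T}$ by (MT1). Cuntz--Krieger uniqueness for $\KP_{\kappa(\mathfrak m)}(\Gamma_T)$ then finishes the argument, using the vectors $e_{\lambda_v x_T}$ supplied by Lemma~\ref{lem:tail-path}.

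The extra points you verify are details the paper leaves implicit:
\begin{itemize}
\item $x_T$ is aperiodic as a path of $\Gamma$;
\item the orbit model needs only aperiodicity of $x$, not cofinality;
\item the action is $\kappa(\mathfrak m)$-linear, via $U_h$ acting as $\rho_{\mathfrak m}(h)$.
\end{itemize}
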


\begin{proof}
The path $x_T$ is aperiodic, so Proposition~\ref{prop:isotropy} identifies
the isotropy group at $\widetilde{x_T}$ with $H$.  The residue field
$\kappa(\mathfrak m)$ is a simple module over $C=\K[H]$, and induction from
a simple isotropy module is simple.  Thus $M(T,\mathfrak m)$ is simple.

Let $\cO_{x_T}$ be the orbit of $x_T$ in $\G_\Gamma$.  We claim that
\begin{equation}\label{eq:orbit-vertices-T}
 \{y(0):y\in\cO_{x_T}\}=T.
\end{equation}
If $v\in T$, choose $\lambda_v\in v\Gamma_Tw_T$ using
\eqref{eq:tail-cofinal-path}.  Then $\lambda_vx_T$ belongs to the orbit and
starts at $v$.  Conversely, if $y\in\cO_{x_T}$, then
$\sigma^py=\sigma^qx_T$ for some $p,q\in\N^\ell$.  Hence $y(0)$ reaches
$x_T(q)\in T$, and (MT1) gives $y(0)\in T$.  This proves
\eqref{eq:orbit-vertices-T}.

The orbit model of Proposition~\ref{prop:orbit-model} now shows that every
$p_v$ with $v\in H_T$ acts as zero, while the central ideal
$\iota(\mathfrak m)$ acts through zero on $C/\mathfrak m$.  Therefore
$P_{T,\mathfrak m}\subseteq\Ann_A M(T,\mathfrak m)$, and the action factors
through
\[
 \KP_{\kappa(\mathfrak m)}(\Gamma_T)
 \cong A/P_{T,\mathfrak m}.
\]
For $v\in T$, put $y_v=\lambda_vx_T$.  Formula
\eqref{eq:orbit-vertex} gives $p_ve_{y_v}=e_{y_v}$, and hence
$ap_v$ acts nontrivially for every
$0\ne a\in\kappa(\mathfrak m)$.  Since $\Gamma_T$ is aperiodic, the
Cuntz--Krieger uniqueness theorem makes the displayed quotient action
faithful.  Thus its kernel is zero, proving \eqref{eq:ann-MTm}.
\end{proof}

For a character $\chi:H\to\K^\times$, let
$\widehat\chi:\K[H]\to\K$ be its algebra extension and abbreviate
\begin{equation}\label{eq:Mxchi}
 M(x,\chi):=M(x,\ker\widehat\chi).
\end{equation}

\begin{corollary}\label{cor:character-modules}
Suppose $\K$ is algebraically closed and let
$\chi,\psi:H\to\K^{\times}$ be characters.  For a fixed aperiodic path $x$,
\[
 M(x,\chi)\cong M(x,\psi)\quad\Longrightarrow\quad\chi=\psi.
\]
\end{corollary}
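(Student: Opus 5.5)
Since $\K$ is algebraically closed, the kernel $\mathfrak m_\chi=\ker\widehat\chi$ is a maximal ideal of $C$ with residue field $\kappa(\mathfrak m_\chi)=\K$, and $\rho_{\mathfrak m_\chi}=\chi$ under this identification. The plan is to read the character off the module by letting the central units $U_h=\iota(u_h)$ of Corollary~\ref{cor:central-units} act. Both $M(x,\chi)$ and $M(x,\psi)$ are $A$-modules, and $U_h$ is central in $A$, so any $A$-module isomorphism $\varphi:M(x,\chi)\to M(x,\psi)$ intertwines the actions of $U_h$. Thus it suffices to show that $U_h$ acts on $M(x,\chi)$ as the scalar $\chi(h)$ and on $M(x,\psi)$ as $\psi(h)$. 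Then $\chi(h)\varphi(\xi)=\varphi(U_h\xi)=U_h\varphi(\xi)=\psi(h)\varphi(\xi)$, and choosing $\xi\ne0$ (the modules are nonzero, since $\cO_x\neq\varnothing$) gives $\chi(h)=\psi(h)$ for every $h\in H$.

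To compute the action of $U_h$, I would use the orbit model of Proposition~\ref{prop:orbit-model}. Write $h=h_+-h_-$ and $q_h=f(h_+)=f(h_-)$, so that $U_h=\sum_v\sum_{\lambda\in v\Gamma^{q_h}}s_{(\lambda,h_+)}s_{(\lambda,h_-)^*}$. On a basis vector $e_y$ with $y\in\cO_x$, the ghost formula \eqref{eq:orbit-ghost} kills every term except $\lambda=y(0,q_h)$. That term contributes $\rho(j(q_h)-h_-)e_{\sigma^{q_h}y}$, and the real formula \eqref{eq:orbit-real} then gives $\rho(h_+-j(q_h))e_{\lambda\sigma^{q_h}y}$. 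Since $\lambda\sigma^{q_h}y=y$, the product is $\rho(h_+-h_-)e_y=\chi(h)e_y$. This uses only that $\rho$ is a homomorphism on $H$ and that both exponents lie in $H$.

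Alternatively, and more abstractly, the proof of Theorem~\ref{thm:induced-realisation} already notes that the central copy of $C$ acts through $C\to\kappa(\mathfrak m)$. Concretely, $U_h$ corresponds to the isotropy element $h$ at $\widetilde x$, which commutes past the tensor in $\K L_x\otimes_{\K[H]}V$. Either route gives the scalar action, and the orbit model makes it explicit.

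The main care point is the case $q_h=0$. There the sum defining $U_h$ degenerates to $\sum_v s_{(v,h_+)}s_{(v,h_-)^*}$, which involves the degree-zero-in-$\Gamma$ pullback paths, and the same computation must still apply. It does, since $j(0)=0$ and $y(0,0)=y(0)$. I would also check that the identification $\kappa(\mathfrak m_\chi)\cong\K$ is compatible with $\rho$. Beyond these points I expect no real obstacle. The argument in fact shows more: $\Ann_A M(x,\chi)\cap\iota(C)=\iota(\ker\widehat\chi)$. So even without an explicit isomorphism, equal annihilators already force $\ker\widehat\chi=\ker\widehat\psi$, and hence $\chi=\psi$.
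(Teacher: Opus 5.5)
Your proposal is correct and takes the same approach as the paper's proof: the central unit $U_h$ acts on $M(x,\chi)$ as the scalar $\chi(h)$, and any $A$-module isomorphism must intertwine the $U_h$-actions. Your orbit-model computation, including the $q_h=0$ case, supplies the verification of the scalar action that the paper only asserts.
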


\begin{proof}
On $M(x,\chi)$, the central unit $U_h$ acts as the scalar $\chi(h)$.
An isomorphism must intertwine the action of every $U_h$.
\end{proof}

\begin{remark}\label{rem:not-all-simple}
Theorems~\ref{thm:induced-realisation} and \ref{thm:tailwise-induced}
classify primitive
\emph{annihilators}; it does not classify all simple $A$-modules.  In
particular, we do not claim that two modules $M(x,\mathfrak m)$ and
$M(y,\mathfrak n)$ are isomorphic only under a stated orbit condition.  Such
a statement requires a separate orbit-rigidity argument.
\end{remark}

\section{Examples and the boundary of the method}\label{sec:examples}

We first work out a family in which all central generators, primitive ideals
and induced modules can be written in terms of edges.

\begin{example}[A pullback of the $n$-loop graph]\label{ex:rose}
Let $n\geq2$ and let $R_n$ be the $1$-graph with one vertex $v$ and loops
$a_1,\ldots,a_n$.  Its Kumjian--Pask algebra is the Leavitt algebra
$L_{\K}(1,n)$.  The graph is cofinal and aperiodic.

Define
\[
 f:\N^{2}\longrightarrow\N,\qquad f(p,q)=p+q,
\]
and choose $j(t)=(t,0)$.  Then $H=\Z g$, where
$g=(-1,1)$.  In $f^{*}R_n$ put
\[
 b_i=(a_i,(1,0)),\qquad r_i=(a_i,(0,1)).
\]
The $b_i$ have degree $(1,0)$, the $r_i$ have degree $(0,1)$, and the
factorisation rules are
\begin{equation}\label{eq:commuting-squares}
 b_i r_j=r_i b_j\qquad(1\leq i,j\leq n).
\end{equation}
If $z=u_g$, Theorem~\ref{thm:decomposition} gives
\begin{equation}\label{eq:rose-isomorphism}
 \KP_{\K}(f^{*}R_n)
 \cong L_{\K}(1,n)\otimes_{\K}\K[z^{\pm1}],
 \qquad
 b_i\longmapsto a_i\otimes1,\quad
 r_i\longmapsto a_i\otimes z.
\end{equation}
The kernel unit and its inverse are
\begin{equation}\label{eq:rose-U}
 U=U_g=\sum_{i=1}^{n}r_i b_i^{*},
 \qquad
 U^{-1}=U_{-g}=\sum_{i=1}^{n}b_i r_i^{*}.
\end{equation}
In particular, $U$ is central and corresponds to $1\otimes z$.
\end{example}

Suppose first that $\K$ is uncountable and algebraically closed.  For
$a\in\K^{\times}$, define
\begin{equation}\label{eq:Pa}
 P_a=\langle U-a1\rangle.
\end{equation}

\begin{proposition}\label{prop:rose-primitive}
The ideals $P_a$, $a\in\K^{\times}$, are precisely the primitive ideals of
$\KP_{\K}(f^{*}R_n)$.  Moreover,
\begin{equation}\label{eq:Pa-edges}
 P_a=\langle r_i-a b_i:1\leq i\leq n\rangle
\end{equation}
and
\begin{equation}\label{eq:rose-quotient}
 \KP_{\K}(f^{*}R_n)/P_a\cong L_{\K}(1,n).
\end{equation}
\end{proposition}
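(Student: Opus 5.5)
This is a direct specialisation of Corollary~\ref{cor:torus} to $\Gamma=R_n$, $k=2$, $\ell=1$, $H=\Z g$ and $r=1$, with basis $h_1=g$. I will check the hypotheses and then translate the general generators into the edge notation of Example~\ref{ex:rose}.

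First, $R_n$ is row-finite, has no sources, has one vertex, and is cofinal and aperiodic for $n\ge2$. So Section~\ref{sec:ideals} applies with $A=\KP_{\K}(f^*R_n)$ and $C=\K[z^{\pm1}]$, where $z=u_g$. By Corollary~\ref{cor:torus}, the primitive ideals are exactly
\[
P_a=A(U_g-a1),\qquad a\in\K^\times .
\]
To match this with \eqref{eq:Pa}, I compute $U_g$ from \eqref{eq:Uvh}. Here $g_+=(0,1)$, $g_-=(1,0)$ and $q_g=1$. The sum runs over the loops $a_i$, giving
\[
U_g=\sum_i s_{(a_i,(0,1))}s_{(a_i,(1,0))^*}=\sum_i r_ib_i^*,
\]
which is \eqref{eq:rose-U}. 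Since $U$ is central by Corollary~\ref{cor:central-units}, the two-sided ideal $\langle U-a1\rangle$ equals $A(U-a1)$. Distinct values of $a$ give distinct ideals by the injectivity in Theorem~\ref{thm:spectra}.

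Next comes the edge description \eqref{eq:Pa-edges}. I will prove both inclusions directly rather than through the general path generators \eqref{eq:Pchi-path}. For $\subseteq$, (KP4) at degree $(1,0)$ gives $\sum_i b_ib_i^*=1$. Hence
\[
U-a1=\sum_i (r_i-ab_i)b_i^*\in\langle r_i-ab_i\rangle .
\]
For $\supseteq$, apply $\Theta_j$ from \eqref{eq:rose-isomorphism}. It sends $r_i-ab_i$ to $a_i\otimes(z-a)=(a_i\otimes1)(1\otimes(z-a))$, which is $b_i(U-a1)$ pulled back, so $r_i-ab_i\in P_a$. Since $\Theta_j$ is an isomorphism, this is a legitimate identity in $A$. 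As a sanity check, one can also verify it in $A$ itself:
\[
b_iU=\sum_j b_ir_jb_j^*=\sum_j r_ib_jb_j^*=r_i
\]
by \eqref{eq:commuting-squares} and (KP4). Thus $r_i-ab_i=b_i(U-a1)$, which gives the inclusion purely from the factorisation rules.

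Finally, the quotient \eqref{eq:rose-quotient} follows from \eqref{eq:quotient} with $J=\mathfrak m_a=(z-a)$. This gives $A/P_a\cong\KP_{C/\mathfrak m_a}(R_n)$. The residue field $C/\mathfrak m_a$ is $\K$ via $z\mapsto a$, so the quotient is $\KP_\K(R_n)=L_\K(1,n)$. Explicitly, $b_i\mapsto a_i$ and $r_i\mapsto aa_i$.

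The only step needing care is the identification of the generators. This means computing $U_g$ from \eqref{eq:Uvh} with the correct signs of $g_\pm$, and checking that the two-sided ideal generated by $U-a1$ coincides with the one-sided form used in Corollary~\ref{cor:torus}; centrality settles the latter. The exhaustivity claim relies entirely on the standing uncountable algebraically closed hypothesis through Theorem~\ref{thm:spectra}.
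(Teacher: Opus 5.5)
Your proposal is correct and follows essentially the paper's argument. Exhaustivity and the quotient come from specialising the cofinal--aperiodic results, i.e. evaluating $z$ at $a$ in the decomposition, and \eqref{eq:Pa-edges} comes from the identities $U-a1=\sum_i(r_i-ab_i)b_i^{*}$ and $r_i-ab_i=b_i(U-a1)$. The only cosmetic difference is that the paper obtains $r_i=Ub_i$ from (KP3), whereas you obtain $r_i=b_iU$ from the commuting squares and (KP4), or via $\Theta_j$; these agree because $U$ is central.
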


\begin{proof}
The description of the primitive ideals and the quotient follows by evaluating
$z$ at $a$ in \eqref{eq:rose-isomorphism}.  For the edge generators, (KP3)
gives
\[
 Ub_i=\sum_{j=1}^{n}r_j b_j^{*}b_i=r_i,
\]
and hence every $r_i-a b_i$ belongs to $P_a$.  Conversely, (KP4) gives
\[
 U-a1=\sum_{i=1}^{n}(r_i-a b_i)b_i^{*},
\]
which proves \eqref{eq:Pa-edges}.
\end{proof}

Let $x$ be a non-eventually-periodic infinite word in the alphabet
$\{1,\ldots,n\}$, viewed as an infinite path of $R_n$.  On the vector space
with basis indexed by the tail-equivalence class $[x]$, the simple module
with annihilator $P_a$ has the particularly transparent action
\begin{align}
 b_i e_y&=e_{iy},& r_i e_y&=a e_{iy},\label{eq:rose-module-real}\\
 b_i^{*}e_{jy}&=\delta_{i,j}e_y,&
 r_i^{*}e_{jy}&=a^{-1}\delta_{i,j}e_y.
 \label{eq:rose-module-ghost}
\end{align}
Thus the red family acts as $a$ times the blue family.  This is the orbit
model of Proposition~\ref{prop:orbit-model}.

Over a field which is not algebraically closed, let
$q(z)\in\K[z]$ be irreducible with $q(0)\ne0$.  The corresponding maximal
ideal of $\K[z^{\pm1}]$ is $(q(z))$, and the associated primitive ideal is
\begin{equation}\label{eq:Pq}
 P_q=\langle q(U)\rangle.
\end{equation}
Its induced module has coefficient field
$\K[z^{\pm1}]/(q(z))$; it need not be one-dimensional over $\K$.

\begin{example}[Two maximal tails and their gluing]\label{ex:two-tails}
Let $E$ be the directed $1$-graph with vertices $v,w$, two loops
$a_1,a_2$ at $v$, two loops $c_1,c_2$ at $w$, and one edge $e$ from $v$ to
$w$; in our convention, $r(e)=v$ and $s(e)=w$.  The saturated hereditary
subsets are
\[
 \varnothing,\qquad \{w\},\qquad \{v,w\}.
\]
Both $E$ and $E\setminus\{w\}$ are aperiodic, since each loop has the other
loop as an entrance.  Thus $E$ is strongly aperiodic.  It is not cofinal:
an infinite path which remains at $v$ cannot be reached from $w$.

There are exactly two maximal tails,
\begin{equation}\label{eq:two-tails}
 T_0=\{v,w\},\qquad T_1=\{v\},
 \qquad T_1\subsetneq T_0.
\end{equation}
Keep $f(p,q)=p+q$, $H=\Z(-1,1)$, and write $U=U_{(-1,1)}$.
If $\K$ is uncountable and algebraically closed,
Theorem~\ref{thm:tail-classification}
gives, for $a\in\K^\times$,
\begin{align}
 P_{T_0,a}&=\langle U-a1\rangle,
 &A/P_{T_0,a}&\cong\KP_\K(E),
 \label{eq:two-tail-P0}\\
 P_{T_1,a}&=\langle p_w,U-a1\rangle,
 &A/P_{T_1,a}&\cong L_\K(1,2).
 \label{eq:two-tail-P1}
\end{align}
The first quotient is primitive but not simple: it retains the nonzero ideal
generated by $p_w$.  This is why the proof of
Theorem~\ref{thm:tailwise-induced} uses Cuntz--Krieger uniqueness rather than
a simplicity assertion.

The maximal-tail topology has
\begin{equation}\label{eq:Sierpinski-tail}
 \mathcal S(w)=\{T_0\},\qquad
 \mathcal S(v)=\{T_0,T_1\}.
\end{equation}
It is the Sierpi\'nski space with $T_0$ as its open generic point.  Hence
\begin{equation}\label{eq:two-tail-spectrum}
 \Prim A\cong\{T_0,T_1\}\times\K^\times,
 \qquad
 \overline{\{P_{T_0,a}\}}
 =\{P_{T_0,a},P_{T_1,a}\}.
\end{equation}
No primitive ideal with character $b\ne a$ lies in this closure.  For the
tailwise module attached to $T_0$, one may choose an aperiodic path at $w$;
$v$ reaches its initial vertex through $e$.  For $T_1$, choose an aperiodic
word in $a_1,a_2$ based at $v$.
\end{example}

\subsection*{Why the standing hypotheses matter}

The conclusions above change as soon as the base graph ceases to be
aperiodic.  For example, let $\Gamma$ be the $1$-graph with one vertex and
one loop, and keep $f(p,q)=p+q$.  Then
\[
 \KP_{\K}(f^{*}\Gamma)\cong
 \K[x^{\pm1},y^{\pm1}].
\]
Its primitive spectrum is two-dimensional over an algebraically closed
field, whereas $H\cong\Z$ accounts for only one Laurent variable.  In this
case the coefficient ideal theorem from Section~\ref{sec:ideals} cannot be
applied because the base graph is periodic.

Theorem~\ref{thm:tail-classification} handles the hereditary components
created by removing cofinality, but it uses strong aperiodicity in every
proper quotient.  Without this hypothesis, a tail quotient may acquire
additional periodicity and the constant fibre $\MaxSpec\K[H]$ is no longer
sufficient.  If the vertex set is infinite, the units $U_h$ need not belong
to the algebra and the proof that $\mathord\uparrow T$ is open breaks down;
see Remark~\ref{rem:infinite-vertices}.  Finally, periodic $P$-graphs with a
torsion quotient group lie outside the present kernel-of-a-surjection setting
and may yield only a tensor embedding; compare \cite[Example~5.5]{Yang}.

\subsection*{Beyond strong aperiodicity}

For a periodic tail $T$, the pullback calculation suggests the larger group
\begin{equation}\label{eq:expected-period-group}
 \bar f^{-1}(\Per(\Gamma_T))
 \cong H\oplus\Per(\Gamma_T).
\end{equation}
One would therefore expect primitive ideals to involve the vertex ideal
$I_{H_T}$ together with cycline relations, in the sense of
\cite{ClarkGilCantoNasr}, determined by isotropy data over
the group in \eqref{eq:expected-period-group}.  This is the mechanism behind
the analytic classification in \cite{CarlsenKangShotwellSims}; a general
topological description for amenable groupoids with abelian isotropy appears
in \cite{ChristensenNeshveyev}.  The algebraic analogue does not follow
formally.  The general Effros--Hahn-type theorem for Steinberg algebras
identifies primitive ideals as kernels of isotropy-induced representations,
but does not in general make the inducing isotropy ideal primitive or the
induced representation irreducible \cite{SteinbergEffrosHahn}.

Thus a substantially stronger algebraic target is to prove exhaustivity,
equality of parameters and hull--kernel gluing for the putative ideals
\begin{equation}\label{eq:periodic-target}
 I_{H_T}
 +\langle\text{cycline relations determined by primitive isotropy data over }
          \K[\bar f^{-1}(\Per(\Gamma_T))]\rangle.
\end{equation}
%Formula \eqref{eq:periodic-target} is a research target, not a theorem of
%this paper.

\textbf{Acknowledgements:} This work is partially supported by the Institute of Mathematics, VAST
under grant number CSCL01.01/25-26.

\end{document}